\theoremstyle{definition}
\newtheorem{theorem}{Theorem}  
\newtheorem{definition}{Definition}
\newtheorem{proposition}[theorem]{Proposition}
\newtheorem{problem}{Problem}
\newtheorem{remark}{Remark}
\newtheorem{assumption}{Assumption}
\newtheorem{example}{Example}
\newcommand{\mc}{\mathcal}
\newcommand{\im}{\operatorname{Im}}
\renewcommand{\ker}{\operatorname{Ker}}
\newcommand{\real}{\mathbb{R}}
\newcommand{\naturalpos}{\mathbb{N}_{>0}}
\newcommand{\naturalnneg}{\mathbb{N}_{\geq 0}}
\newcommand{\realnneg}{\real_{\geq 0}}
\newcommand{\complex}{\mathbb{C}}
\newcommand{\tsp}{\mathsf{T}}
\newcommand{\inv}{{\negat 1}} 
\newcommand{\negat}{\scalebox{0.75}[.9]{\( - \)}}
\newcommand*{\QEDB}{\hfill\ensuremath{\square}}
\newcommand{\QEDBB}{\tag*{$\square$}}
\newcommand*{\QEDBL}{\hfill\ensuremath{\blacksquare}}
\newcommand{\map}[3]{#1: #2 \rightarrow #3}
\newcommand{\until}[1]{\{1,\dots,#1\}}
\newcommand{\norm}[1]{\Vert #1 \Vert}
\DeclareMathAlphabet{\mymathbb}{U}{BOONDOX-ds}{m}{n}
\def\symmetric{\mathbb{S}}
\def\defeq{\colonequals}
\newcommand{\bmat}[1]{\begin{bmatrix}#1\end{bmatrix}}
\title{\LARGE\textbf{Feedback Optimization of Dynamical Systems in Time-Varying Environments: An Internal Model Principle Approach
}}
\begin{document}

\author{Gianluca Bianchin, {\it IEEE Member} \and\qquad Bryan Van Scoy, {\it IEEE Member}\thanks{%
G.~Bianchin is with the ICTEAM institute and the Department of Mathematical Engineering (INMA) at the University of Louvain, Belgium.
Email: \texttt{gianluca.bianchin@uclouvain.be}}%
\thanks{
B.~Van~Scoy is with the Dept. of Electrical and Computer Engineering, Miami University, OH~45056, USA. Email: \texttt{bvanscoy@miamioh.edu}%
} \hspace{-1cm}}

\maketitle

\thispagestyle{plain}\pagestyle{plain}

\begin{abstract}
Feedback optimization has emerged as a promising approach for regulating 
dynamical systems to optimal steady states that are implicitly defined by 
underlying optimization problems. Despite their effectiveness, existing methods 
face two key limitations: 
(i) reliable performance is restricted to time-invariant or slowly varying 
settings, and (ii) convergence rates are limited by the need for the controller 
to operate orders of magnitude slower than the plant.
These limitations can be traced back to the reliance of existing techniques on 
numerical optimization algorithms.
In this paper, we propose a novel perspective on the design of feedback 
optimization algorithms, by framing these objectives as an output regulation 
problem. 
We place particular emphasis on time-varying optimization problems, and show 
that an algorithm can track time-varying optimizers if and only if it 
incorporates a model of the temporal variability inherent to the optimization -- 
a requirement that we term the \textit{internal model principle of feedback 
optimization.}
Building on this insight, we introduce a new design methodology that couples 
output-feedback stabilization with a control component that drives the system 
toward the critical points of the optimization problem. This framework enables 
feedback optimization algorithms to overcome the classical limitations of slow 
tracking and poor adaptability to time variations.
\end{abstract}

\vspace{-.5cm}


\section{Introduction}
\label{sec:intro}

Feedback optimization 
techniques~\cite{MC-ED-AB:20,AH-SB-GH-FD:20,AJ-ML-PB:09,SM-AH-SB-GH-FD:18,LL-ZN-EM-JS:18,GB-JC-JP-ED:21-tcns} are concerned with the problem of 
controlling dynamical systems to an optimal steady-state point, where 
optimality is implicitly defined by an underlying mathematical optimization
problem~\cite{AJ-ML-PB:09}. 
This framework has been successfully applied in a variety of domains, including 
optimal scheduling in communication networks, resource allocation in power 
systems, transportation system optimization, and the operation of industrial 
control processes~\cite{AH-ZH-SB-GH-FD:24}.
The foundational design approach underlying these methods begins with an 
established optimization algorithm~\cite{YN:18}, and in suitably adapting it to 
incorporate feedback measurements in place of unknown 
quantities~\cite{AJ-ML-PB:09,AH-ZH-SB-GH-FD:24}. This modification results in a 
feedback loop between the optimization algorithm and the dynamical system.
Although numerous feedback optimization methods have been developed, these 
techniques inherently carry over the limitations of the optimization algorithms 
from which they originate. 
Such limitations include fundamental bounds on the achievable convergence 
rate~\cite{MB-FD:25}, limited robustness to 
uncertainty~\cite{GB-JIP-ED:20-automatica}, and the inability to exactly track 
optimizers when in time-varying 
settings~\cite{GB-JIP-ED:20-automatica,GB-JC-JP-ED:21-tcns}.

\begin{figure}[tb]
\centering
\includegraphics[width=.8\columnwidth]{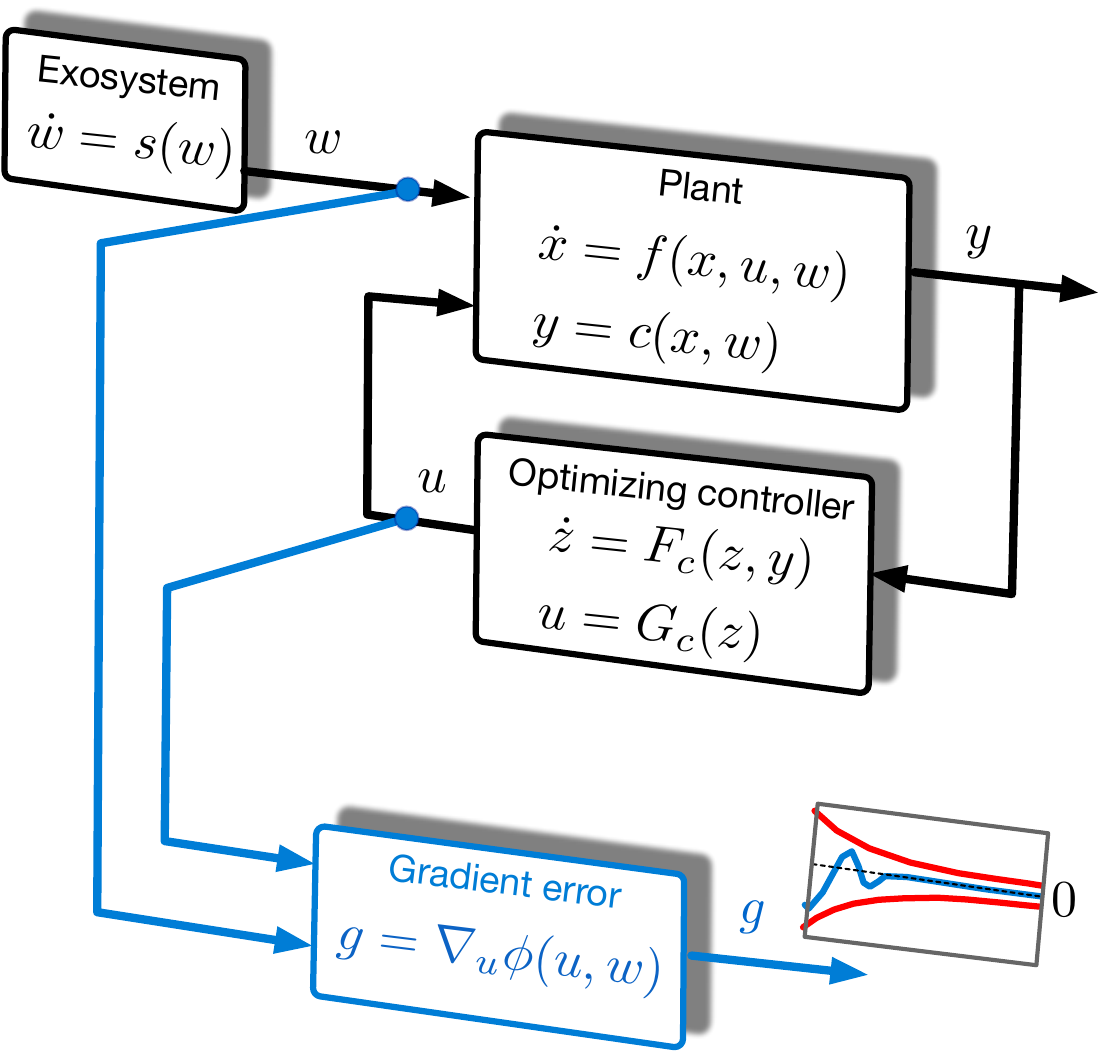}
\caption[]{Architecture of the feedback optimization scheme studied in this 
work.  Given a plant 
subject to a time-varying disturbance,
the goal is to design a control algorithm
having access only to measurements $y(t)$ and generating a 
sequence of control actions $u(t)$ such that the interconnection is stabilized 
around an equilibrium and the gradient signal $g(t) \to 0$~as~${t \to \infty}$. 
Blue indicates quantities that are unmeasurable by the controller; in 
particular, the controller has no access to the 
signal $g(t)$ to be regulated to zero. See~\eqref{eq:interconnected_system}.
}
\label{fig:block_diagram_FO}
\vspace{-.5cm}
\end{figure}

In this work, we approach the design of feedback optimization algorithms from a 
novel perspective: by formulating it as an output regulation 
problem~\cite{ED:76,BF-WW:76,BF:77,AI-CIB:90,CD-CL:85}. 
Specifically, in this setting, the signal to be regulated corresponds to the 
(unmeasurable) gradient error -- see Fig.~\ref{fig:block_diagram_FO} for an 
illustration.
By establishing this connection, we are able not only to design novel feedback 
optimization algorithms that are more general than those derived from classical 
optimization methods, but also to study an entire class of algorithms and 
establish fundamental limitations for the whole class. This class includes, as 
special cases, many well-established techniques~\cite{MC-ED-AB:20,AH-SB-GH-FD:20,AJ-ML-PB:09,SM-AH-SB-GH-FD:18,LL-ZN-EM-JS:18,GB-JC-JP-ED:21-tcns}.
Our approach introduces three key practical innovations compared to the 
existing literature:  
(i) under suitable system knowledge, our methods enable exact tracking of 
optimal steady states despite arbitrary time-varying disturbances -- whereas 
existing techniques can only achieve approximate tracking 
(see~\cite{GB-JC-JP-ED:21-tcns} and references therein);  
(ii) our framework does not require plant pre-stabilization, as it 
simultaneously addresses both stabilization and setpoint tracking; and  
(iii) it removes the need for a time-scale separation between the plant and the 
controller, thereby improving the achievable rate of convergence.

{\bf Related work.}
The field of feedback optimization stems from the seminal 
works~\cite{AJ-ML-PB:09,FB-HD-CE:12}, which considered the objective of 
steering the outputs of a plant to a steady-state that solves a convex 
optimization problem. 
While early works focused on static plants, a more recent body of 
literature~\cite{LL-ZN-EM-JS:18,LL-JS-EM:20,MC-ED-AB:20,AH-SB-GH-FD:20,AJ-ML-PB:09,SM-AH-SB-GH-FD:18,GB-JC-JP-ED:21-tcns} extends feedback optimization to 
dynamic plants, addressing closed-loop stability and exponential convergence 
rates~\cite{GB-JC-JP-ED:21-tcns,GB-JIP-ED:20-automatica}, including in 
sampled-data settings~\cite{GB-DL-MD-SB-JL-FD:21}. 
Constrained variants of the problem have also been explored, particularly using 
primal-dual dynamics and projection-based 
methods~\cite{MC-ED-AB:20,AH-SB-GH-FD:20,GB-JC-JP-ED:21-tcns}. An alternative 
approach employs barrier function 
techniques~\cite{YC-LC-JC-ED:23,GD-PM-JC-MH-25}.
Nonconvex problems have been considered 
in~\cite{YT-ED-AB-SH:18,VH-AH-LO-SB-FD:20}, gradient-free methods 
in~\cite{ZH-SB-JH-FD-XG:23}, distributed approaches 
in~\cite{GC-NM-GN:24,AM-GB:25}. Recent developments have sought to relax the 
reliance on time-scale separation~\cite{MB-FD:25}.

A central contribution of this work is the reformulation of the feedback 
optimization problem as an output regulation problem; this connects our work 
with the literature on output regulation. This 
field originated in the 1970s with foundational work on linear 
systems~\cite{BF-WW:76,ED:76,BF:77}, and was subsequently extended to 
nonlinear systems through both local~\cite{AI-CIB:90} and 
global~\cite{CB-AI-LP:03} analysis techniques. In recent years, output 
regulation has seen renewed interest through the lens of modern control 
methods; see, for example,~\cite{AI-LM-LP:12} and the 
recent tutorial~\cite{JH-AI-LM-MM-ES-WW:18}. Connections between optimal steady-state 
control and output regulation have been also recognized in~\cite{LL-JS-EM:18}.


Finally, since the problem studied in this work is a time-varying 
optimization problem, it is naturally connected to the literature on 
online optimization~\cite{EH:16,AS-ED-SP-GL-GG:20}. Particularly 
relevant are the recent works~\cite{NB-RC-SZ:24,GB-BVS:24-arxiv,AW-IP-VU-IS:24}, which explore the use of internal models to address such problems.

{\bf Contributions.}
This paper features three main contributions. 
First, we demonstrate that the feedback optimization problem can be formulated 
as an output regulation problem (see 
Section~\ref{sec:reformulation_as_output regulation} and 
Definition~\ref{def:tracking}). 
This reformulation enables the use of tools from the output regulation literature 
to address the problem, allowing us to analyze feedback optimization algorithms as 
a broad class of methods -- rather than relying on ad hoc algorithmic constructs 
from optimization theory.
From a methodological perspective, the techniques we develop here extend existing 
approaches in several key directions:  
(i) they enable exact asymptotic tracking of a critical point even in the presence 
of arbitrary time-varying disturbances -- whereas prior work has been limited to 
constant disturbances;  
(ii) they allow for simultaneous plant stabilization and 
feedback optimization, while these two objectives have typically been treated 
separately in the literature; and  
(iii) they eliminate the need for a time-scale separation between the plant and 
the controller, which removes inherent limitations on the maximum 
achievable convergence rate.   To the best of our knowledge, all of these 
features are novel within the literature.
Second, we establish fundamental limitations for a broad class of feedback 
optimization methods (see Section~\ref{sec:necessary_conditions} and 
Theorem~\ref{thm:existence_dynamic_feedback}). Notably, we show that exact tracking 
of a critical point is possible if and only if the algorithm embeds a duplicated 
representation of the disturbance signal -- though this copy may be expressed in a 
different coordinate system (see
Theorem~\ref{thm:characterization_dynamic_feedback}).  We refer to this concept as 
an internal model principle, akin to its counterpart in time-varying 
optimization~\cite{GB-BVS:24-arxiv} and controls~\cite{BF-WW:76,JH-WW:84}.
Interestingly, since many existing feedback optimization methods in the literature 
are special cases of the general class of algorithms considered in this work (see 
Remark~\ref{rem:basig_feedback_opt}), our results imply that these methods achieve 
exact tracking only when the time-varying signals involved in the optimization 
problem are, in fact, constant. In all other cases, they necessarily yield only 
inexact tracking (see Remark~\ref{rem:internal_model_interpret_feedopt}).
To the best of our knowledge, this is the first work in the literature that 
formally proves the necessity of internal models -- a requirement that aligns with 
recent insights presented in~\cite{GB-BVS:24-arxiv}.
Third, we derive necessary and sufficient conditions for exact asymptotic 
tracking of a critical trajectory (see
Theorem~\ref{thm:characterization_dynamic_feedback}) and, by leveraging these, 
we introduce a novel design procedure for feedback optimization algorithms. 
Our design technique relies on a separation principle, which combines an 
observer to estimate unmeasurable dynamical states, and a static feedback 
control action, which steers the system toward the set of critical points (see 
Fig.~\ref{fig:separation_principle_structure}, Section~\ref{sec:alg_design} and 
Algorithm~\ref{alg:dynamic_feedback_design}).
%

{\bf Organization.}
Section~\ref{sec:prob_formulation} introduces the problem, which is reformulated 
as an output regulation problem in 
Section~\ref{sec:reformulation_as_output regulation}. 
Section~\ref{sec:necessary_conditions} presents necessary conditions for 
solvability and outlines the controller structure. Static and dynamic feedback 
optimization are addressed in Sections~\ref{sec:static_feedback} 
and~\ref{sec:dynamic_feedback}, respectively. Section~\ref{sec:alg_design} 
details the algorithm design, while Section~\ref{sec:extensions} discusses 
extensions to constrained problems. Numerical validations are provided in 
Section~\ref{sec:simulations}, and conclusions in Section~\ref{sec:conclusions}. 
Technical proofs appear in Appendix~\ref{sec:comlemetary_proofs}, and 
Appendix~\ref{sec:center_manifold} reviews relevant center manifold theory.

{\bf Notation.}
We let
$\complex_{<}\defeq\{s: \operatorname{Re} s<0\}$ and 
$\complex_{\geq}\defeq\{s: \operatorname{Re} s \geq 0\}.$ 
We denote the space of $n\times n$ symmetric real matrices by $\symmetric^n$. 
Given an open set $U,$ we say that $\map{f}{U}{\real}$ is of 
differentiability class $C^k$ if it has a $k\textsuperscript{th}$ derivative 
that is continuous in $U.$ 
%
Given $A \in \real^{n \times n},$ denote its eigenvalues by
$\lambda_j = a_j + ib_j, a_j, b_j\in \real,j \in \until n$ with corresponding 
(generalized) eigenvectors $w_j = u_j + iv_j, u_j, v_j\in \real^n$. The space 
$\mathscr{X}_<(A)\defeq\operatorname{span}\{u_j, v_j:a_j<0\}$
is the {\it stable subspace} of $A$, and 
$\mathscr{X}_{\geq}(A) \defeq\operatorname{span}\{u_j, v_j:a_j\geq 0\}$ is the 
{\it unstable subspace} of $A$. Given $B \in \real^{n \times m}$, the {\it 
controllable subspace} of $(A,B)$ is 
$\mc C(A,B) \defeq \sum_{i=1}^{n} \im(A^{i-1} B).$ 
The pair $(A,B)$ is {\it controllable} if $\mc C(A,B)=\real^n$ and 
{\it stabilizable} if $\mathscr{X}_{\geq}(A) \subseteq \mc C(A,B).$
Given $C \in \real^{q \times n}$, the {\it unobservable subspace} of $(C,A)$ is $\mc O^c(C,A) \defeq \cap_{i=1}^{n} \ker(CA^{i-1}).$ 
The pair $(C,A)$ is observable if $\mc O^c(C,A) = \emptyset$ and 
{\it detectable} if $\mc O^c(C,A) \cap \mathscr{X}_{\geq}(A) = \emptyset.$

\begin{table*}[t]
\caption{Comparison between our framework and existing methods in 
reference tracking~\cite{ED:76,BF-WW:76,BF:77,AI-CIB:90}, disturbance 
rejection~\cite{CD-CL:85}, and feedback 
optimization~\cite{AH-SB-GH-FD:20,GB-JC-JP-ED:21-tcns,MC-ED-AB:20}. 
See Remarks~\ref{rem:relation_problem_literature}, 
\ref{rem:basig_feedback_opt}, and \ref{rem:internal_model_interpret_feedopt}
for  discussions.
}
\setlength{\tabcolsep}{4pt}
\centering\begin{tabular}{l|c|c|c|c|c|}
  & $w(t)\neq 0$ &  Plant & Controller at & Automatic reference & $x$-dependent \\
    & \& time-varying  & stabilization & same timescale &  generation & objectives  \\
  \hline
  Reference tracking~\cite{ED:76,BF-WW:76,BF:77,AI-CIB:90} & & \checkmark & \checkmark & & \\
  \rowcolor{gray!25}Tracking and disturbance rejection~\cite{CD-CL:85} & \checkmark & \checkmark &\checkmark   & & \\
  Feedback optimization~\cite{AH-SB-GH-FD:20,MC-ED-AB:20,AH-ZH-SB-GH-FD:24} & & & & \checkmark & \\
  \rowcolor{gray!25} Time-varying feedback optimization~\cite{GB-JC-JP-ED:21-tcns} & \checkmark & & & \checkmark &   \\
  This work & \checkmark &\checkmark & \checkmark & \checkmark & \checkmark \\
  \hline
\end{tabular}
\label{tab:comparison_literature}
\end{table*}

\section{Problem formulation}
\label{sec:prob_formulation}

In this section, we formulate the problem studied and illustrate its 
applicability through a representative example.

\subsection{Problem statement}
We consider plants described by nonlinear systems of differential equations of 
the form:
\begin{align}\label{eq:plant}
\dot x(t) &= f(x(t),u(t),w(t)),\nonumber \\
y(t) &= c(x(t),w(t)),
\end{align}
where $t \in \realnneg$ denotes time, $x(t)\in X\subseteq\real^n$ is the state, $u(t) \in \real^m$ is the control input, 
$y(t) \in \real^q$ is the measurable output, and $w(t)\in W\subseteq\real^p$ is a disturbance. 
We assume that $f: X \times \real^m \times W \to \real^n$ 
and $c: X \times W \to \real^q$ are~$C^1$.

The signal $w(t)$ models exogenous disturbances that may affect the plant and/or 
measurements, possibly independently (see Section~\ref{sec:segway_robot}).
Consequently, we treat $w(t)$ as an unknown and unmeasurable 
time-varying signal in our analysis.

In this work, we study the problem of devising control actions that accomplish 
two goals: (i) locally exponentially stabilize the plant~\eqref{eq:plant}, and 
(ii) regulate~\eqref{eq:plant} to an optimal equilibrium point. 
The second objective is formalized by means of the following mathematical 
optimization problem:
\begin{align}\label{eq:optimization}
\underset{u \in \real^m, x \in X}{\text{minimize}} ~~~& \phi_0(u,x),\nonumber\\
\text{subject to:} ~~ & 0  = f(x,u,w(t)),
\end{align}
where $\phi_0: \real^m \times X \rightarrow \real$. 
This optimization formalizes an equilibrium-selection problem,
which seeks to select an equilibrium input $u$ and state $x$ 
for~\eqref{eq:plant}
that minimize the loss $\phi_0(u,x)$, which 
quantifies performance at the equilibrium\footnote{Our approach also 
applies to more general constraints than merely equilibrium selection (such 
as predictive control), provided that any state that satisfies the 
constraints can be written as a function of the control input and exogenous 
noise, as in Assumption~\ref{as:steadyStateMap}.}. 

From an optimization perspective, \eqref{eq:optimization} is an optimization 
problem that is {\it parametrized} by the exogenous signal $w(t)$. This aspect 
has two important implications: first, because $w(t)$ is unknown and 
unmeasurable, solutions to~\eqref{eq:optimization} cannot be computed 
explicitly by an optimization solver (since a numerical value for $w(t)$ 
cannot be substituted into~\eqref{eq:optimization} to solve the optimization); 
second, because $w(t)$ is a time-varying signal, \eqref{eq:optimization} 
defines a {\it sequence} of optimization problems (one at each time $t$), and 
thus the problem of regulating the plant to solutions 
of~\eqref{eq:optimization} involves also {\it tracking these solutions over 
time.} 
For these reasons, our control objective cannot be accomplished using standard 
control approaches such as reference generation plus reference 
tracking~\cite{ED:76,BF-WW:76,BF:77,AI-CIB:90}. 
We informally state the problem of interest as follows. 

\begin{problem}[\textbf{\textit{Feedback optimization algorithm design -- informal}}]
\label{prob:feedback_opt_informal}
Construct, when possible, a control algorithm such that: 
(i) the equilibrium of~\eqref{eq:plant} with $w(t)\equiv 0$ is locally exponentially 
stable, and (ii) the states and inputs of \eqref{eq:plant} converge, with 
exactly zero asymptotic error, to a solution of~\eqref{eq:optimization} for any time-varying signal $w(t)$ in some class.~
\QEDB\end{problem}

We refer to a control algorithm that achieves these goals as a 
\textit{feedback-optimization algorithm}, in line with the 
counterparts in~\cite{GB-JC-JP-ED:21-tcns,AH-SB-GH-FD:20,GC-NM-GN:24}.
We stress that existing methods are capable of tracking solutions 
of~\eqref{eq:optimization} only {\it inexactly}, unless $w(t)$ is a 
constant signal. In contrast, our goal is to design control 
algorithms capable of achieving exactly zero asymptotic error when 
$w(t)$ is time-varying.

We conclude this discussion by relating our problem with 
the existing literature in Remark~\ref{rem:relation_problem_literature}.

\begin{remark}[\textbf{\textit{Relationship of~\eqref{eq:optimization} with 
models commonly studied in the literature}}]
\label{rem:relation_problem_literature}
The optimization objective~\eqref{eq:optimization} is related to the classical 
{\it (reference) tracking problem}, also called the {\it (output) regulation 
problem} or {\it servomechanism problem}~\cite{ED:76,BF-WW:76,BF:77,AI-CIB:90}, which consists of designing a controller for~\eqref{eq:plant} such that, with $w(t) \equiv 0$, the closed-loop system is stable and $\lim_{t \to \infty} y(t) - r(t) = 0$, where $r(t)$ is a given reference signal.  
More generally, \eqref{eq:optimization} is related to the {\it 
(reference) tracking and disturbance rejection 
problem}~\cite{CD-CL:85}, which extends the 
reference tracking problem by allowing $w(t)$ to be nonzero and time-varying. 
The optimization objective~\eqref{eq:optimization} is also related to the 
feedback optimization literature~\cite{AH-SB-GH-FD:20,GB-JC-JP-ED:21-tcns}, 
where controllers are designed by drawing inspiration from algorithmic 
optimization. See Table~\ref{tab:comparison_literature}.

The optimization~\eqref{eq:optimization} extends these existing 
approaches in at least four directions. First, in reference tracking 
problems, the reference signal to be tracked is prespecified, thus dictating 
the need for external reference generation mechanisms 
(e.g.,~\cite{SL:06}), which often require knowledge of $x(t)$ and 
$w(t),$  thus making these approaches unfeasible or suboptimal when 
these signals are unknown. 
Instead, the 
formulation~\eqref{eq:optimization} allows us to implicitly generate the 
reference to be tracked as the solution to a mathematical optimization 
problem. 
Second, both in feedback optimization and reference tracking, performance 
objectives or references to be tracked can be specified only in terms of the 
plant's output $y(t)$ and not in terms of the plant's state $x(t)$ 
(or, in other words, they can be applied to solve~\eqref{eq:optimization} only 
when the plant's output map is such that $y(t)=x(t)$). 
By allowing the loss function to depend on the state, \eqref{eq:optimization}
provides additional flexibility in applications with partially observed plants 
(i.e., where $x(t)$ cannot be directly estimated from $y(t)$); this 
aspect is illustrated in detail Section~\ref{sec:segway_robot}. 
Third, feedback optimization techniques require the plant to be 
prestabilized. Fourth, feedback optimization methods rely on a timescale 
separation between the (fast) plant and (slow) controller, and this poses 
limitations on the maximum attainable rate of convergence. 
Additional connections between our methods and existing techniques are 
established in Table~\ref{tab:comparison_literature}.
\QEDB\end{remark}
%

\subsection{Illustrative application: automatically optimize an unstable system subject to unknown disturbances}
\label{sec:segway_robot}
Consider a two-wheeled balancing robot moving on a surface as in Fig.~\ref{fig:segway}(a), and the objective of self-balancing this inverted-pendulum type system. For this task, we consider the kinematic model in 
Fig.~\ref{fig:segway}(b); letting $\theta$ be the rotation angle of the 
center of mass and $r$ the horizontal displacement of the wheels, we model the 
robot using the equation of motion:
\begin{align*}
J_e \ddot{\theta}(t)= m g \ell \sin \theta(t)-k \ell^2 \dot{\theta}(t) 
- m \ell \ddot r(t) \cos \theta(t) + w_x(t),
\end{align*}
where $J_e>0$ is the moment of inertia of the rod about its end, $m>0$ the 
mass, $\ell>0$ the distance from the center of gravity to the pivot, $g$ the 
gravitational acceleration, and we assumed the presence of a frictional force 
with coefficient $k>0$. The signal $w_x(t)$ is used to describe external 
disturbances or model discrepancies, such as uneven surface profiles or 
frictions.
Viewing the cart's acceleration $\ddot r(t)$ as the control input (i.e., 
letting $u(t)\defeq\ddot r(t)$), the problem of self-balancing the robot can be 
formulated as the following instance of~\eqref{eq:optimization}:
\begin{align}\label{eq:optimization_regulation_pendulum}
\underset{u, \theta, \dot \theta \in \real}{\text{minimize}} ~~~& 
\frac{1}{2} ( \theta^2 + \dot \theta^2),\nonumber\\
\text{subject to:} ~~ & 0 = \dot \theta, \nonumber\\
& 0 = m g \ell \sin \theta - m \ell u \cos \theta + w_x(t).
\end{align}
Suppose the robot is equipped with an integrated Inertial Measurement Unit 
(IMU) providing noisy measures of the angular rate of change $\dot \theta(t)$:
\begin{align}\label{eq:y_pendulum}
y(t) &= \dot \theta(t) + w_y(t),
\end{align}
where $w_y(t)$ models sensor noise. Because $y(t)$ does not include 
displacement information (i.e., it does not explicitly depend on $\theta(t)$), 
any method that achieves $y(t)\to 0$ as $t \to \infty$ (such as reference 
tracking or feedback optimization, see 
Remark~\ref{rem:relation_problem_literature}) will not guarantee 
that $\theta(t) \to 0,$ but only that $\dot \theta(t) \to 0.$
This limitation is illustrated in Fig.~\ref{fig:segway}(c), where 
a reference tracking technique~\cite{AI-CIB:90} is applied 
to this problem. In contrast, the techniques proposed in this work are capable 
of achieving $\theta(t) \to 0$ (see Fig.~\ref{fig:segway}(d)). 
See Section~\ref{sec:simulations} for additional details on this problem and a 
description of the techniques used.

\begin{figure}[t]
\centering \subfigure[]{\includegraphics[width=.4\columnwidth]{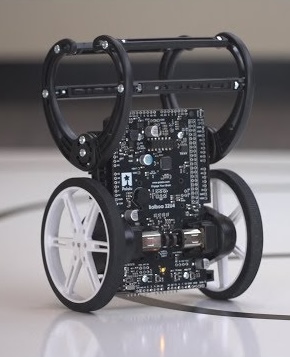}}
\hfill
\centering \subfigure[]{\includegraphics[width=.4\columnwidth]{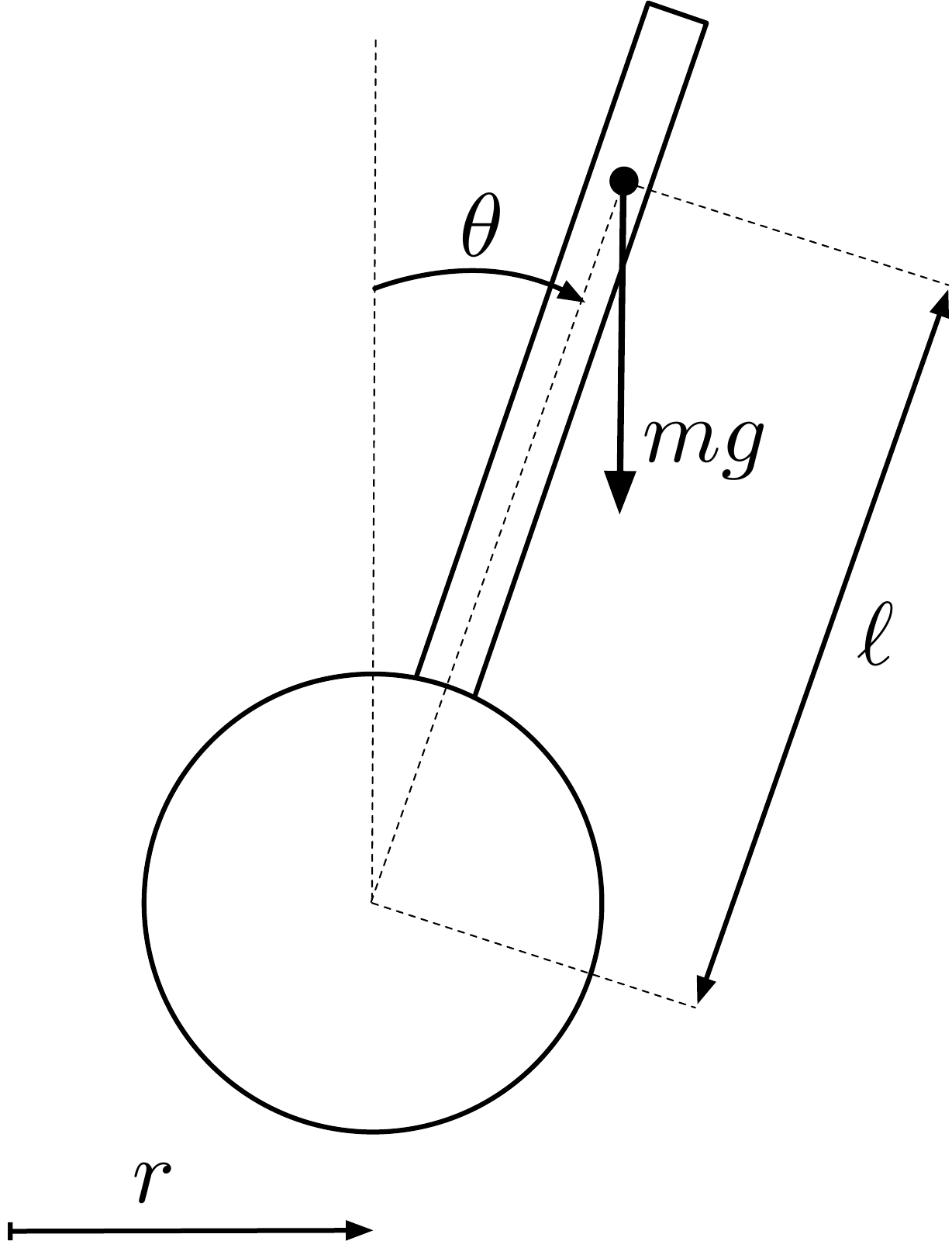}}\\
\centering \subfigure[]{\includegraphics[width=\columnwidth]{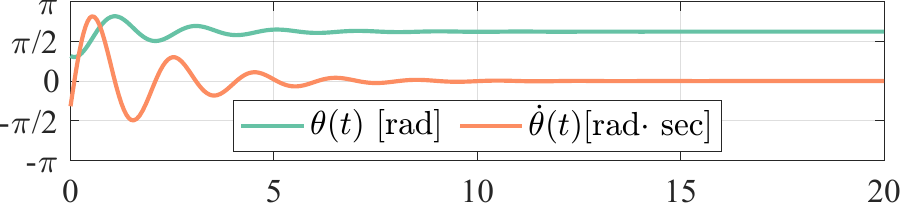}}\\
\centering \subfigure[]{\includegraphics[width=\columnwidth]{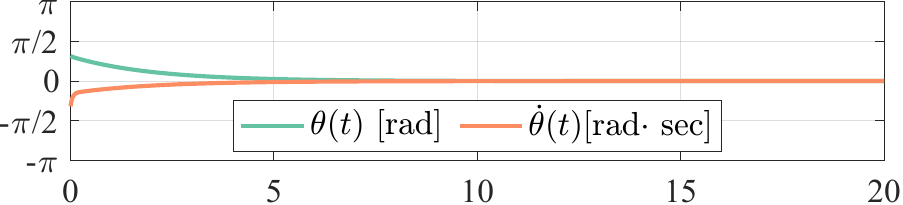}}
\caption{
(a) Illustration of the balancing robot discussed in 
Section~\ref{sec:segway_robot}.  
(b) Free-body diagram of the robot.  
(c) State evolution under control via a standard output regulation algorithm.  
(d) State evolution under the control strategy proposed in this work.  
The simulation demonstrates that traditional output regulation 
algorithms fail to stabilize the robot in the upright position -- 
i.e., to achieve $\theta(t) \to 0$ -- due to the inability to 
measure $\theta(t)$. 
Parameters used:  $\ell = 0.023$ [m],  $m = 0.316$ [Kg], 
$k=0.1$ [Kg/s], $g = 9.81$ [m/s$^2$], $J_e = 0.000444$ [Kg\,m$^2$].
For simplicity of the illustration, this simulation uses $w_x(t) \equiv 0$ and 
$w_y(t) \equiv 0$; see Section~\ref{sec:simulations} for additional simulations.
}
\vspace{-.5cm}
\label{fig:segway}
\end{figure}

\section{Technical reformulation of the problem: from feedback optimization to output regulation}
\label{sec:reformulation_as_output regulation}

In this section, we present a formal reformulation of 
Problem~\ref{prob:feedback_opt_informal}, connecting it with the output 
regulation framework.

\subsection{Standing assumptions}

We now present the assumptions on which our approach is based. We 
make the following standard assumption on the~loss. 

\begin{assumption}[\textbf{\textit{Properties of the objective function}}]
\label{as:lipschitz_convexity}
The map $(u,x) \mapsto \phi_0(u, x)$ is convex and differentiable, and $(u,x) \mapsto \nabla \phi_0(u,x)$ is Lipschitz continuous in $\real^m \times X.$
\end{assumption}

Convexity and smoothness are standard assumptions in 
optimization~\cite{EH:16}, which have been widely used in works on related 
problems~\cite{GB-JC-JP-ED:21-tcns,AH-SB-GH-FD:20,GC-NM-GN:24}.

We assume that the temporal variability of the disturbance $w(t)$ belongs to 
the following class. 

\begin{assumption}[\textbf{\textit{Class of temporal variabilities of the disturbance signal}}]
\label{as:exosystem}
There exists a $C^1$ vector field $s: W \rightarrow \real^p$ such that the disturbance signal $w(t)$ satisfies:
\begin{align}\label{eq:exosystem}
\dot{w}(t)=s(w(t)) \qquad \forall t\in\real_{\geq 0}.
\end{align}
Moreover, \eqref{eq:exosystem} has an equilibrium at $w = 0$ and its 
trajectories are bounded. 
\QEDB
\end{assumption}

We do not assume \textit{a priori} that $s(w)$ or $w(t)$
are known; see 
Sections~\ref{sec:static_feedback}--\ref{sec:dynamic_feedback}, where 
different assumptions on the knowledge of these two quantities are discussed. 
Assumption~\ref{as:exosystem} characterizes the class of temporal 
variabilities for the disturbance taken into account. 
This assumption is mild, as it only requires that $w(t)$ is deterministic, 
sufficiently smooth (so that its derivative is some $C^1$ function), and 
its trajectories remain bounded.  Following established terminology in the 
literature~\cite{AI-CIB:90,ED:76}, we refer to the model 
in~\eqref{eq:exosystem} as the {\it exosystem}.

We observe that asymptotically stable modes in $w(t)$ do not affect the 
optimization~\eqref{eq:optimization} when $t\to\infty$ (precisely, because 
these modes converge to $0$ as $t\to\infty$). Hence, without loss of 
generality, in what follows, we will assume that~\eqref{eq:exosystem} has 
no asymptotically stable modes.
For simplicity of the presentation, we also assume that $W$ is some 
neighborhood of 
the origin of $\real^p$. We put no restrictions on the size of this 
neighborhood (which is, e.g., allowed to be the entire space 
$W = \real^p$), and thus on the size of $w(t)$ nor of its temporal 
variation. Moreover, there is no 
restriction with asking that $W$ contains the origin because, if 
$w(t)$ takes values in the neighborhood of any other point, such a point
can be shifted to the origin through a time-invariant change of variables 
without altering the solutions of~\eqref{eq:optimization}. 

We make the following assumption on the plant to control.


\begin{assumption}[\textbf{\textit{Existence of a steady-state map}}]
\label{as:steadyStateMap}
There exists a unique $C^1$ function $h: \real^m \times W \to {X}$ such that
$f(h(u,w), u,w)=0,$ for any $u \in \real^m$ and $w \in W$. 
\QEDB\end{assumption}

In words, Assumption~\ref{as:steadyStateMap} guarantees that the 
plant~\eqref{eq:plant}, with constant inputs, admits a steady-state operating 
point.
Existence of such a steady-state map is guaranteed in most practical cases of 
interest~\cite{HKK:96}. For example, this condition is ensured when 
$\nabla_x f(x,u,w)$ is invertible in $\real^m \times W$; alternatively, it 
can be guaranteed by application of the implicit function 
theorem~\cite[Thm.~2]{HN-FF:22}, \cite[Thm.~6]{AA-SZ:19} to the 
equation ${f(x,u, w)=0.}$

Using Assumption~\ref{as:steadyStateMap}, the optimization 
problem~\eqref{eq:optimization} can be reformulated as an unconstrained problem:
%
%
\begin{align}\label{eq:optimization_unconstrained}
\underset{u \in \real^m}{\text{minimize}} ~~~& \phi_0(u,h(u, w(t)) = \phi(u,w(t)),
\end{align}
where $\phi(u,w) \defeq \phi_0(u,h(u, w))$ for $u \in \real^m$ and $w \in W.$ 
Next, we define a critical trajectory as an input signal that results in a vanishing gradient of the unconstrained problem.

\begin{definition}[\textbf{\textit{Critical trajectory}}]\label{defn:critical-trajectory}
The function $\map{u^\star}{\realnneg}{\real^m}$ is said to be a 
\textit{critical trajectory} of~\eqref{eq:optimization_unconstrained} if it 
satisfies:
\begin{align*}
0=\nabla_u \phi(u^\star(t),w(t)), \quad \forall t \in \realnneg. \QEDBB
\end{align*}
\end{definition}

Notice that, if $u^\star(t)$ is a critical trajectory 
of~\eqref{eq:optimization_unconstrained}, then 
$(u^\star(t),h(u^\star(t),w(t)) \in \real^m \times X$ is a critical trajectory 
of~\eqref{eq:optimization},
\begin{align*}
0=\left.\nabla_u \phi_0(u,h(u,w))\right\vert_{(u,w)=(u^\star(t),w(t))}, 
\quad \forall t \in \realnneg.
\end{align*}
We therefore refer to 
the critical trajectories of~\eqref{eq:optimization} and those 
of~\eqref{eq:optimization_unconstrained} interchangeably. 
For the optimization problem to be well-posed, we make the following 
assumption. 

\begin{assumption}[\textit{\textbf{Existence and continuity of critical trajectories}}]
\label{as:existence_continuity_critical_trajectory}
The optimization~\eqref{eq:optimization_unconstrained} admits a critical 
trajectory. Moreover, every critical trajectory is continuous. 
\QEDB\end{assumption}

Existence of a critical trajectory for~\eqref{eq:optimization} can be 
guaranteed under mild assumptions; for example, coercivity 
of the cost function (i.e., $\phi_0(u,x)\to \infty$ when 
$\Vert{u}\Vert \to \infty$), or by 
requiring that the search domain can be restricted to a compact set without 
altering the optimizers (by Weierstrass' theorem~\cite{RS:96}). 
Continuity of the critical trajectories can also be ensured under standard 
assumptions; for example, by requiring that 
the loss $\phi_0(u,x)$ is continuous in $x$ (by Berge's theorem~\cite{RS:96}).

\subsection{Controller structure}

To address Problem~\ref{prob:feedback_opt_informal}, we search 
within a class of candidate control algorithms that do not require 
measurements of $w(t).$ 
Instead, we consider algorithms that rely solely on the plant's 
measurable output $y(t),$ and are described by an internal state 
$z(t),$ evolving on an open subset $Z \subseteq \real^{n_c}$ for 
some $n_c \in \mathbb{N}_{\geq 0}.$
Based on this information, the algorithm generates a control input 
$u(t) \in \real^m$ at each time instant. 
See Fig.~\ref{fig:block_diagram_FO} for a block diagram illustration.
Formally, we consider the  following class of control 
algorithms\footnote{Although one could consider a 
more general control algorithm of the form $\dot z(t) = F_c(z(t), y(t))$ and 
$u(t) = G_c(z(t),y(t)),$ we will show in Section~\ref{sec:dynamic_feedback} 
that allowing for $G_c$ to depend on $y(t)$ is unnecessary. Hence, we 
focus the simpler formulation~\eqref{eq:controller_equatons} for the sake 
of notation.}:
\begin{align}\label{eq:controller_equatons}
\dot z(t) &= F_c(z(t), y(t)), \nonumber\\
u(t) &= G_c(z(t)),
\end{align}
where $\map{F_c}{Z \times \real^q}{\real^{n_c}},$ 
$\map{G_c}{Z}{\real^{m}},$ and the algorithm's state space dimension
$n_c \in \mathbb{N}_{\geq 0}$ are to be designed. 
We note that~\eqref{eq:controller_equatons} defines a general class 
of algorithms within which we will seek our design. 
We discuss in Remark~\ref{rem:basig_feedback_opt} how this class encompasses several existing methods as special cases.

Problem~\ref{prob:feedback_opt_informal} then involves designing the functions $F_c(z,y)$ and $G_c(z),$  together with $n_c,$ such 
that the {\it gradient error} signal:
\begin{align}\label{eq:gradient_signal}
    g(t) = \nabla_u \phi(u(t),w(t)),
\end{align}
satisfies $g(t) \rightarrow 0$  as $t \rightarrow\infty.$ 
Note that the gradient error signal cannot be evaluated by the 
controller, as $w(t)$ is unmeasurable. Instead, the controller must 
regulate $g(t)$ to zero despite only having access to the plant's 
output $y(t)$.

The dynamics of the plant~\eqref{eq:plant}, combined with the 
exosystem \eqref{eq:exosystem} and
controller~\eqref{eq:controller_equatons}, form a nonlinear 
autonomous system:
\begin{align}\label{eq:interconnected_system}
\dot x(t) &= f(x(t),u(t),w(t)),\nonumber 
& y(t) &= c(x(t),w(t)), \nonumber \\
\dot z(t) &= F_c(z(t), y(t)), \nonumber
& u(t) &= G_c(z(t)),\nonumber \\
\dot{w}(t) &=s(w(t)),
& g(t) &= \nabla_u \phi(u(t),w(t)).
\end{align}
See Fig.~\ref{fig:block_diagram_FO} for an illustration of the interconnection.

\begin{remark}[\textbf{\textit{Generality of the controller 
class~\eqref{eq:controller_equatons}}}]
\label{rem:basig_feedback_opt}
To solve problem~\eqref{eq:optimization_unconstrained}, the authors 
of~\cite{AH-SB-GH-FD:20,AH-ZH-SB-GH-FD:24}, under the additional assumption that 
$h(u,w)=\hat h(u)+w$, proposed the controller
\begin{align}\label{eq:feedback_opt_controller}
    \dot u(t) &= - \eta [\nabla_1\phi_0(u(t), y(t))
    + J^\tsp_{\hat h}(u(t)) \nabla_2\phi_0(u(t), y(t))],
\end{align}
where $\eta>0$ is a tunable gain, $J_{\hat h}$ denotes the Jacobian 
matrix of $\hat h(u)$, and $\nabla_1 \phi_0$, and $\nabla_2 \phi_0$ 
are used to denote the gradient of $\phi_0$ with respect to the first 
and second variable, respectively. When the disturbance $w(t)$ is 
constant and the plant~\eqref{eq:plant} is exponentially 
pre-stabilized, a sufficiently-small choice of the tunable gain 
$\eta$ ensures that condition~\ref{def:tracking_b} is 
met~\cite{AH-SB-GH-FD:20,MC-ED-AB:20}.
It is immediate to verify that~\eqref{eq:feedback_opt_controller} is a special 
case of our controller class~\eqref{eq:controller_equatons} with the choices:
\begin{align}\label{eq:feedback_opt_controller_functions}
F_c(z,y) &= - \eta [\nabla_1\phi_0(z, y)
    + J^\tsp_{\hat h}(u) \nabla_2\phi_0(u, y)],\notag\\
    G_c(z) &= z.
\end{align}
It follows that the controller structure~\eqref{eq:controller_equatons} is sufficiently general 
to include a number of existing methods as specific instances. Therefore, any 
conclusions drawn about the class of algorithms
in~\eqref{eq:controller_equatons} will also 
hold for these existing methods.
\QEDB\end{remark}

\subsection{Technical problem statement}

As stated previously, our goal is to design a control algorithm that 
(i) stabilizes the closed-loop system with zero disturbance locally 
about an equilibrium and (ii) regulates the state and control input 
to an optimal solution of~\eqref{eq:optimization} for all 
disturbances generated by the exosystem~\eqref{eq:exosystem}.
Therefore, we assume existence of an equilibrium point $(x_\circ^\star,z_\circ^\star,u_\circ^\star,y_\circ^\star)$ of the closed-loop system~\eqref{eq:interconnected_system} with zero disturbance for which the gradient is zero. That is\footnote{The equilibrium point can be constructed by first finding $x_\circ^\star$ and $u_\circ^\star$ such that $0 = f(x_\circ^\star,u_\circ^\star,0)$ and $0 = \nabla_u \phi(u_\circ^\star,0)$, and then constructing $y_\circ^\star$ and $z_\circ^\star$ along with the controller $F_c$ and $G_c$ to satisfy the remaining conditions.},
\begin{align}\label{eq:equilibrium}
    0 &= f(x_\circ^\star,u_\circ^\star,0),
    & 0 &= F_c(z_\circ^\star,y_\circ^\star), & 0 &= \nabla_u \phi(u_\circ^\star,0), \nonumber \\
    y_\circ^\star &= c(x_\circ^\star,0),
    & u_\circ^\star &= G_c(z_\circ^\star). &&
\end{align}
Moreover, we assume that $u_\circ^\star$ and $z_\circ^\star$ are locally unique.
We can now formalize the notion of 
exact asymptotic tracking.

\begin{definition}[\textit{\textbf{Exact asymptotic tracking}}]
\label{def:tracking}
The controlled plant~\eqref{eq:interconnected_system} is said to {\it exactly 
asymptotically track a critical trajectory} of~\eqref{eq:optimization} when 
the following conditions hold:
\begin{enumerate}[label={(D\ref{def:tracking}\textup{\alph*})},leftmargin=*]
\item \label{def:tracking_a}
The equilibrium $(x_\circ^*, z_\circ^\star)$ of the autonomous 
system:
\begin{align}\label{eq:autonomous_system_w=0}
\dot x(t) &= f(x(t),G_c(z(t)), 0), \nonumber \\
\dot z(t) &= F_c(z(t), c(x(t),G_c(z(t))) ), 
\end{align}
is locally exponentially stable.

\item \label{def:tracking_b}
There exists a neighborhood $\Upsilon \subseteq X \times Z \times W$ of 
$(x_\circ^\star, z_\circ^\star, 0)$ such that, for each initial condition 
$(x(0), z(0), w(0)) \in \Upsilon,$ the solution 
of~\eqref{eq:interconnected_system} satisfies:
\begin{align}
\lim_{t \to \infty} g(t)=0. \QEDBB
\end{align}
\end{enumerate}
\end{definition}

With this background, we are now ready to make the objectives of 
Problem~\ref{prob:feedback_opt_informal} mathematically rigorous.

\begin{problem}[\textbf{\textit{Feedback optimization algorithm design -- formal}}]
\label{prob:feedback_opt_formal}
Design, when possible, $F_c(z,y)$, $G_c(z)$, and $n_c$ so 
that~\eqref{eq:interconnected_system} exactly asymptotically tracks a 
critical trajectory of~\eqref{eq:optimization}.~
\QEDB\end{problem}

We conclude this section with an illustration of our framework.

\begin{example}[\textbf{\textit{State regulation for linear systems}}]
\label{ex:linear_quadratic}
Consider an instance of~\eqref{eq:plant} with linear dynamics:
\begin{align}\label{eq:linear_plant}
    \dot x(t) &= Ax(t) + Bu(t) + Pw(t), \nonumber\\
    y(t) &= C x(t) + Qw(t),
\end{align}
where $A \in \real^{n \times n}, B \in \real^{n \times m}, P \in \real^{n \times p}, C \in \real^{q \times n}, Q\in \real^{q \times  p}$. 
Consider an optimal state-regulation problem~\eqref{eq:optimization}, where the 
objective is to regulate the state of the plant to an optimal equilibrium that 
balances operational costs and control effort:
\begin{align}\label{eq:optimization_linear_regulation}
\underset{u \in \real^m, x \in \real^n}{\text{minimize}} ~~~& 
\tfrac{1}{2} \norm{x}^2 + \tfrac{\lambda}{2} \norm{u}^2,\nonumber\\
\text{subject to:} ~~ & 0  = Ax + Bu + Pw(t),
\end{align}
where $\lambda\geq 0$ is a regularization parameter. 
If $A$ is invertible, a mapping $h(u,w)$ satisfying 
Assumption~\ref{as:steadyStateMap} exists and is given by $h(u,w) = T_{xu} u + T_{xw} w$,
where $T_{xu} = - A^\inv B$ and $T_{xw} = - A^\inv P.$
The corresponding unconstrained problem~\eqref{eq:optimization_unconstrained} 
is:
\begin{align*} 
\underset{u \in \real^m}{\text{minimize}} ~~~& 
\tfrac{1}{2} \norm{T_{xu} u + T_{xw} w(t)}^2 + \tfrac{\lambda}{2} \norm{u}^2.
\end{align*}
Restricting, for simplicity of the illustration, the 
class~\eqref{eq:controller_equatons} to linear controllers, the objective of 
this work is thus to design: 
\begin{align}\label{eq:controller_linear}
    \dot z(t) &= A_c z(t) + B_c y(t), \nonumber\\
    u(t) &= C_c z(t), 
\end{align}
where $A_c \in \real^{n_c\times n_c}, B_c \in \real^{n_c\times q}, C_c \in \real^{m \times n_c},$ such that the interconnected system is exponentially 
stable and the gradient $g(t) = R u(t) + T w(t)$ converges to zero as $t\to\infty$, where $R \defeq T_{xu}^\tsp T_{xu} + \lambda I$ and $T \defeq T_{xu}^\tsp T_{xw}.$ \QEDB
\end{example}

\section{Necessary conditions for exact tracking and proposed controller structure}
\label{sec:necessary_conditions}

In this section, we derive a set of necessary conditions for the existence of an algorithm 
that solves Problem~\ref{prob:feedback_opt_formal}, and we outline the 
structure of the controller we propose to address this problem.

\subsection{Necessary conditions for exact asymptotic tracking}
To state our conditions, we will require the Jacobian matrices 
$A \in \real^{n \times n}, P \in \real^{n \times p}, C \in \real^{q \times n}, Q \in \real^{q \times p}, B \in \real^{n \times m}, S \in \real^{p \times p}$, and $T \in \real^{m\times p}$, defined as follows:
\begin{alignat}{2}\label{eq:jacobians}
A &\defeq \left[\frac{\partial f}{\partial x}\right]_{(x,u,w)=(x_\circ^\star, u_\circ^\star,0)},\nonumber & \quad P &\defeq \left[\frac{\partial f}{\partial w}\right]_{(x,u,w)=(x_\circ^\star, u_\circ^\star,0)},\\
C &\defeq \left[\frac{\partial c}{\partial x}\right]_{(x,w)=(x_\circ^\star,0)},\nonumber &
Q &\defeq \left[\frac{\partial c}{\partial w} \right]_{(x,w)=(x_\circ^\star,0)}, \nonumber\\
B &\defeq \left[\frac{\partial f}{\partial u}\right]_{(x,u,w)=(x_\circ^\star, u_\circ^\star,0)}, &
S &\defeq \left[\frac{\partial s}{\partial w}\right]_{w=0}, \nonumber\\
T &\defeq \left[\frac{\partial \nabla_u \phi }{\partial w}\right]_{(u,w)=(u_\circ^\star,0)}. &&
\end{alignat}
Recall also that we denote by $\mathscr{X}_{\geq}(A)$ the unstable subspace of 
$A$ and by $\mc O^c(C,A)$ the unobservable subspace of the pair $(C,A)$ (see 
Section~\ref{sec:intro}-Notation). 

\begin{proposition}[\textit{\textbf{Necessary conditions for exact tracking}}] 
\label{prop:necessary_conditions_tracking}
System~\eqref{eq:interconnected_system} exactly asymptotically track a critical 
trajectory of~\eqref{eq:optimization} only if the following conditions hold:
\begin{enumerate}
\item The pair $(A,B)$ is stabilizable.
\item The pair $(C,A)$ is detectable. 
\item The following inclusion holds:
\begin{align*}
\mathcal{O}^c(C_L, A_L) \cap \mathscr{X}_{\geq}(A_L) \subseteq \operatorname{Ker} 
\begin{bmatrix} 0_{m \times n} & T   \end{bmatrix}.
\end{align*}
where $\displaystyle A_L\defeq \bmat{A & P \\ 0 & S}$ and $\displaystyle C_L \defeq \bmat{C & Q}$.\QEDB
\end{enumerate}
\end{proposition}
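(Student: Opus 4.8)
The plan is to linearize the interconnected system~\eqref{eq:interconnected_system} about the equilibrium point~\eqref{eq:equilibrium} and derive the three conditions from the requirements imposed by Definition~\ref{def:tracking}. I would first write the closed-loop state as $\xi = (x, z, w)$ and compute the Jacobian of the combined vector field, together with the linearization $g \approx \begin{bmatrix} 0 & 0 & T\end{bmatrix}\xi + D u$-type expansion of the gradient-error output; here the Jacobians $A, B, C, P, Q, S, T$ of~\eqref{eq:jacobians} appear, along with the (yet undesigned) controller Jacobians, say $A_c = \partial F_c/\partial z$, $B_c = \partial F_c/\partial y$, $C_c = \partial G_c/\partial z$. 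Condition~\ref{def:tracking_a} requires the $(x,z)$-block of this linearization (the $w=0$ subsystem~\eqref{eq:autonomous_system_w=0}) to be Hurwitz, since local exponential stability of a nonlinear equilibrium is equivalent to Hurwitzness of its linearization (Lyapunov's indirect method). That $(x,z)$-block is exactly the standard output-feedback interconnection of $(A,B,C)$ with $(A_c,B_c,C_c)$.

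Conditions~1 and~2 then follow from a classical fact: the output-feedback interconnection of $(A,B,C)$ with any finite-dimensional linear controller $(A_c, B_c, C_c)$ can be Hurwitz only if $(A,B)$ is stabilizable and $(C,A)$ is detectable. I would prove this by the usual PBH-type argument — if $(A,B)$ has an uncontrollable mode $\lambda$ with $\operatorname{Re}\lambda \geq 0$, the corresponding left eigenvector extends to a left eigenvector of the closed-loop matrix with the same eigenvalue (the controller cannot influence that mode through $B$), contradicting Hurwitzness; dually for detectability using a right eigenvector in $\ker C \cap \ker(A - \lambda I)$. This part is routine.

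The substantive condition is~3. Here the idea is that exact regulation $g(t)\to 0$ for \emph{all} exosystem trajectories — not just for the designed stable closed loop — forces a geometric constraint relating the augmented plant-plus-exosystem pair $(C_L, A_L)$ to the output direction $\begin{bmatrix}0 & T\end{bmatrix}$. I would argue as follows: consider the extended system with state $(x, z, w)$; its dynamics matrix has the block form $\begin{bmatrix} A_L & * \\ * & A_c \end{bmatrix}$ suitably arranged, and the closed loop must be stable by~\ref{def:tracking_a} on the $(x,z)$ part while $w$ evolves autonomously under $S$. The crucial observation is that any mode of $(C_L, A_L)$ that is both unobservable through $C_L$ and unstable (i.e.\ lies in $\mathcal{O}^c(C_L,A_L)\cap\mathscr{X}_\geq(A_L)$) cannot be detected by the controller via $y$, hence cannot be asymptotically cancelled in the regulated output $g$; for $g$ to nonetheless vanish, such a mode must be invisible in $g$, i.e.\ annihilated by $\begin{bmatrix}0_{m\times n} & T\end{bmatrix}$. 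Making this precise requires showing that an initial condition exciting such an unstable, $C_L$-unobservable mode produces a closed-loop trajectory on which $y(t)$ is unaffected (so the controller behaves as if the mode were absent and the $(x,z)$ part converges to the nominal stable behavior) yet on which the residual component persists and would appear in $g$ unless killed by $T$.

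The main obstacle I anticipate is this last step: rigorously decoupling the unstable unobservable subspace of $(C_L, A_L)$ from the stabilizing action of the controller. One has to be careful that "$C_L$-unobservable" is the right notion — the controller sees $y$, which depends on $(x,w)$ through $C_L = \begin{bmatrix} C & Q\end{bmatrix}$, so a direction in $\mathcal{O}^c(C_L, A_L)$ is precisely one the controller cannot react to; combined with instability (so it does not decay on its own), it survives in the limit, and only membership in $\ker\begin{bmatrix}0 & T\end{bmatrix}$ can then guarantee $g(t)\to0$. I would formalize this via a change of coordinates putting $A_L$ in a Kalman-observability-like form with respect to $C_L$, isolating the unobservable block, further splitting it into stable and unstable parts, and then tracking how the full closed-loop solution restricted to the unstable unobservable block evolves — showing $y$ is blind to it and that the regulated output inherits a nonvanishing contribution proportional to $T$ acting on that block unless the inclusion holds. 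The bookkeeping across the three subsystems (plant, controller, exosystem) and the interplay between the two distinct stability/observability decompositions is where the care is needed; everything else reduces to standard linear-systems arguments.
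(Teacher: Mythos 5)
Your proposal follows essentially the same route as the paper: linearize about the equilibrium, obtain conditions 1 and 2 from PBH-type rank conditions forced by Hurwitzness of the closed-loop $(x,z)$ Jacobian, and obtain condition 3 by arguing that an unstable mode of $(C_L,A_L)$ that is unobservable through $y$ cannot be compensated by the controller and hence must be annihilated by $\begin{bmatrix}0 & T\end{bmatrix}$ for $g(t)\to 0$ to hold. The formalization difficulty you flag for condition 3 is real, but the paper's own proof handles that step at the same (in fact, lower) level of rigor — it simply asserts the contradiction — so your sketch is, if anything, more explicit about what would be needed.
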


\begin{proof}
The Jacobian matrix of the dynamics~\eqref{eq:autonomous_system_w=0} has the form: 
\begin{align}
\label{eq:aux_jacobian}
    \begin{bmatrix}
        A & B C_c \\
        B_c C &  A_c + B_c Q C_c
    \end{bmatrix},
\end{align}
where $A, B,$ and $C$ are as in~\eqref{eq:jacobians}, and 
\begin{align}\label{eq:jacobians_ctrl}
A_c &\defeq \left[\frac{\partial F_c}{\partial x}\right]_{(z,y)=(z_\circ^\star, y_\circ^\star)},\nonumber &  B_c &\defeq \left[\frac{\partial F_c}{\partial y}\right]_{(z,y)=(z_\circ^\star, y_\circ^\star)},\\
C_c &\defeq \left[\frac{\partial G_c}{\partial z}\right]_{z=z_\circ^\star}.
\end{align}
Since $(x_\circ^*, z_\circ^\star)$ is locally exponentially stable (by 
\ref{def:tracking_a}), all eigenvalues of the matrix 
in~\eqref{eq:aux_jacobian} must be in $\complex_{<}.$ A necessary condition for asymptotic tracking is then, for all $\lambda\in\complex_{\geq}$,
\begin{align*}
    \operatorname{Ker} \begin{bmatrix}
        A - \lambda I\\
        B_c C 
    \end{bmatrix} = 0
    \quad\text{which implies}\quad
    \operatorname{Ker} \begin{bmatrix}
        A - \lambda I\\
        C
    \end{bmatrix} = 0.
\end{align*}
This proves part 2) of the claim. 
Similarly, Hurwitz stability of the matrix in~\eqref{eq:aux_jacobian} 
necessitates that, for all $\lambda\in\complex_{\geq}$,
\begin{multline*}
    \operatorname{Im} \begin{bmatrix}
        A - \lambda I, & B C_c
    \end{bmatrix} = \real^n \\
    \quad\text{which implies}\quad
    \operatorname{Im} \begin{bmatrix}
        A - \lambda I& B
    \end{bmatrix} =\real^n,
\end{multline*}
thus proving part 1) of the claim. 
To prove 3) assume, by contradiction, that there exists a vector 
$v \in \mathcal{O}^c(C_L, A_L) \cap \mathscr{X}_{\geq}(A_L)$ that 
does not belong to  $\operatorname{Ker} \begin{bmatrix} 0 & T \end{bmatrix}$. 
This implies that there exists an unstable mode 
of~\eqref{eq:autonomous_system_w=0} in the direction $v$ for which the 
gradient signal~\eqref{eq:gradient_signal} is nonzero in a neighborhood of 
the origin of $W.$ But this contradicts~\ref{def:tracking_b} 
in Definition~\ref{def:tracking}.
\end{proof}

Condition 1) in the proposition asks that the plant 
is stabilizable; 2) that it is detectable; and 3) that any 
undetectable mode of the pair $(C_L, A_L)$ lies in the null space of 
the matrix $[0, T].$
Intuitively, the requirement 1) is needed to ensure that~\eqref{eq:plant} can 
be stabilized (see Definition~\ref{def:tracking_a}) and is a classical 
requirement in stabilization problems via state feedback; the 
requirement 2) is needed to ensure that the state of~\eqref{eq:plant} can be 
estimated from output measurements, and is a standard requirement in the 
existing literature on stabilization via output feedback; the requirement 3) 
is specific to our problem, and asks that all unstable modes 
of~\eqref{eq:exosystem} are either detectable from $y(t)$ or do 
not affect the gradient signal $g(t).$ Notice that condition 3) implies 
condition 2); moreover, condition 3) automatically holds when the pair 
$(C_L, A_L)$ is detectable. 
Motivated by these necessary conditions, in the remainder we 
will impose the following assumption.

\begin{assumption}[\textbf{\textit{Stabilizability and Detectability}}]
\label{as:stabilizability_detectability}
The pair $(A,B)$ is stabilizable and the pair $(C_L, A_L)$
is detectable. 
\QEDB\end{assumption}

Assumption~\ref{as:stabilizability_detectability} guarantees that the conditions of Proposition~\ref{prop:necessary_conditions_tracking} are met. 
Moreover, this assumption is in line with the feedback 
optimization literature; this assumption appears explicitly 
in~\cite{LL-ZN-EM-JS:18,LL-JS-EM:20} and 
implicitly in~\cite{AH-SB-GH-FD:20,GB-JC-JP-ED:21-tcns,LL-JS-EM:20}, where 
the plant is assumed to be pre-stabilized (so that 1) and 2) are implicitly 
required), and $w(t)$ is assumed bounded so that, together with the 
stability of the plant, the requirement 3) is ensured to hold.

\begin{figure}[tb]
\centering
\includegraphics[width=\columnwidth]{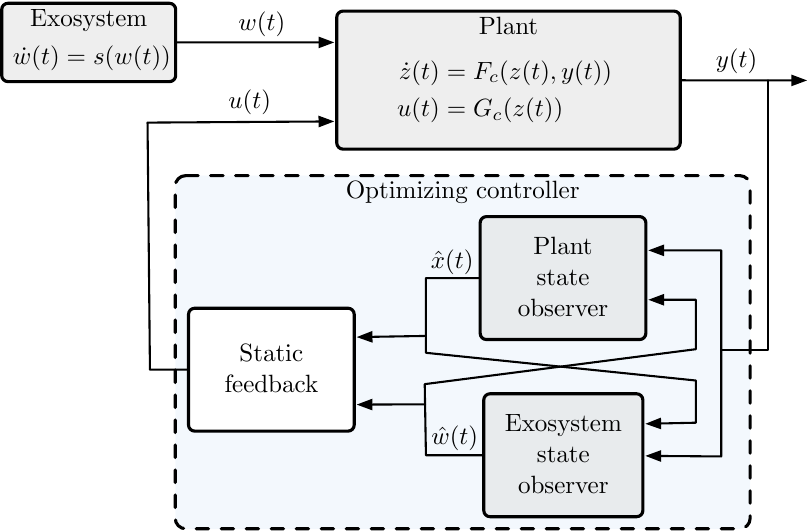}
\caption[]{Block-diagram representation of the optimizing controller 
developed in this work. 
The proposed controller is structured according to a separation principle, 
combining a state estimator -- responsible for reconstructing the unmeasurable 
states \((\hat{x}(t), \hat{w}(t))\) -- with a static feedback law that regulates 
the plant toward a point where the gradient error vanishes. See 
Section~\ref{sec:block_diagram_controller} and the controller 
components~\ref{cond:separation1}–\ref{cond:separation2}.
}
\label{fig:separation_principle_structure}
\vspace{-.5cm}
\end{figure}

\subsection{Structure of the proposed controller}
\label{sec:block_diagram_controller}
Proposition~\ref{prop:necessary_conditions_tracking} suggests the existence of 
a separation principle~\cite{AA-HK:99} between the problems of asymptotic 
stabilization 
(see Definition~\ref{def:tracking_a}) and gradient error regulation 
(see Definition~\ref{def:tracking_b}). 
Concretely, this is reflected in the need for two independent properties: 
stabilizability and detectability.
Although a separation is, for now, only necessary, our control design 
technique relies on showing that such a separation is also sufficient to 
address Problem~\ref{prob:feedback_opt_formal}. Establishing this fact will be 
the objective of the subsequent sections. 
Motivated by this observation, we anticipate that the controller we propose in 
this work has a structure that is inspired from such a separation principle.
Specifically, it consists of two main components (C):
\begin{enumerate}[label=(C\arabic*), leftmargin=*]
\item  \label{cond:separation1}
A static feedback algorithm (based on the dynamic states $x(t), w(t)$, or 
their estimates \(\hat{x}(t), \hat{w}(t)\)), responsible for stabilizing the 
closed-loop system (i.e., ensure~\ref{def:tracking_a}) and drive the plant 
to operating conditions where the gradient error signal \(g(t)\) vanishes 
(i.e., achieve~\ref{def:tracking_b}).
\item  \label{cond:separation2}
A dynamic observer, responsible for estimating the unmeasurable dynamic 
states $x(t)$ and $w(t)$ (i.e., generating the estimates $\hat x(t)$ and 
$\hat w(t)$).
\end{enumerate}

A block diagram of the proposed controller is illustrated in 
Fig.~\ref{fig:separation_principle_structure}.
Developing the components~\ref{cond:separation1}--\ref{cond:separation2} is 
the objective of the subsequent sections; precisely, 
\ref{cond:separation1} is developed in Section~\ref{sec:static_feedback}, 
the effectiveness of this controller structure is discussed in 
Section~\ref{sec:dynamic_feedback}, and  
\ref{cond:separation2} is developed in Section~\ref{sec:alg_design}.


\section{The static-feedback optimization problem}
\label{sec:static_feedback}
In this section, we develop the controller component~\ref{cond:separation1}. 
Precisely, we begin by considering an idealization of the problem where the 
disturbance $w(t)$ and the plant's internal state $x(t)$ can be 
measured by the control algorithm. 
In these cases, because the control algorithm has access to all the dynamic 
states in~\eqref{eq:interconnected_system}, the controller 
model~\eqref{eq:controller_equatons} can be replaced by a static 
(i.e., algebraic) map of the form\footnote{While one could consider a dynamic 
control algorithm of the form $\dot z(t) = F_c(z(t), w(t),x(t))$ and 
$u(t) = G_c(z(t)),$ we will prove in 
Theorem~\ref{thm:existence_parameter_feedback} that such a dynamic structure 
is unnecessary.}:
\begin{align}\label{eq:parameter_feedback}
u(t) &= H_c(x(t),w(t)),
\end{align}
where $\map{H_c}{X \times W}{\real^m}$ is a $C^0$ mapping to be designed. 
In line with the requirements~\eqref{eq:equilibrium}, we will assume that the 
mapping $H_c(x,w)$ to be designed is such that
$H_c(x_\circ^\star,0)=u_\circ^\star.$

In other words, \eqref{eq:parameter_feedback} assumes that the state of the 
plant $x(t)$ and the exogenous disturbance $w(t)$ are measurable and can be 
used directly for feedback. Because of the explicit dependence on $x(t)$ and 
$w(t),$ we will refer to~\eqref{eq:parameter_feedback} to as a 
\textit{static-feedback optimization} algorithm. 


the framework  developed here will be used in combination with the controller 
component~\ref{cond:separation1}  in Section~\ref{sec:dynamic_feedback}  to 
tackle Problem~\ref{prob:feedback_opt_formal} in its full generality.

Composing~\eqref{eq:plant}, \eqref{eq:exosystem}, and 
\eqref{eq:parameter_feedback} yields the closed-loop system:
\begin{align}
\label{eq:copled_system_parameter_feedback}
\dot x(t) &= f(x(t),u(t),w(t)),\nonumber 
& y(t) &= c(x(t),w(t)), \nonumber \\
\dot{w}(t) &=s(w(t)), \nonumber\\
u(t) &= H_c(x(t),w(t)), 
& g(t) &= \nabla_u \phi(u(t),w(t)).
\end{align}

With this framework, Problem~\ref{prob:feedback_opt_formal} is reformulated 
as follows.

\begin{problem}[\textbf{\textit{Static exact asymptotic tracking}}]
\label{prob:parameter_feedback}
Find, if possible, $H_c(x,w)$ such that:
\begin{enumerate}[label={(P\ref{prob:parameter_feedback}\textup{\alph*})},leftmargin=*]
\item \label{prob:parameter_feedback_a}
The equilibrium $x_\circ^*$ of the autonomous system:
\begin{equation}\label{eq:autonomous_staticfeedback}
\dot x(t) = f(x(t),H_c(x(t),0),0),
\end{equation}
is locally exponentially stable.

\item \label{prob:parameter_feedback_b}
There exists a neighborhood $\Upsilon \subseteq X \times W$ of 
$(x_\circ^\star, 0)$ such that, for each initial condition 
$(x(0), w(0)) \in \Upsilon,$ the solution 
of~\eqref{eq:copled_system_parameter_feedback} satisfies:
\begin{align}\label{eq:tracking_g_parameterfeedback}
\lim_{t \to \infty} g(t)=0.
\end{align}
\end{enumerate}
When these conditions hold, \eqref{eq:copled_system_parameter_feedback} is 
said to {\it exactly asymptotically track a critical trajectory} of~\eqref{eq:optimization}.
\QEDB\end{problem}

The following definition is instrumental to our characterization of this problem.

\begin{definition}[\textbf{\textit{Limit point and limit set}}]
A point $\bar w \in W$ is a \emph{limit point with respect 
to the initialization $w_\circ \in W$} if there exists a non-decreasing 
sequence $\{k_i\}_{i\in\naturalnneg},$ with 
${k_i\to\infty}$ as ${i\to\infty},$ such that the trajectory
of~\eqref{eq:exosystem} with $w(0) = w_\circ$ 
satisfies $w({k_i})\to \bar w$ as ${i\to\infty}$. 
For $w_\circ \in W,$ let $\Omega(w_\circ)$ denote the set of all limit points 
(i.e., for all sequences $\{k_i\}_{i\in\naturalnneg}$) 
of~\eqref{eq:exosystem} with respect to the initialization 
$w_\circ.$
Given $W_\circ \subseteq W,$ the set $\Omega(W_\circ) \defeq 
\cup_{w_\circ \in W_\circ} \Omega(w_\circ)
$ 
is called the \textit{limit set with respect 
to initializations in $W_\circ$}~\cite{GDB:27}. 
\QEDB\end{definition}

Intuitively, $\Omega(W_\circ)$ denotes the set of all limit 
points (equilibria, limit cycles, etc.) that can be reached by the disturbance 
state when initialized at points in $W_\circ$. 
By Assumption~\ref{as:exosystem}, 
$\Omega(W_\circ)$ is contained in some neighborhood of the origin 
of $\real^p$, whose radius depends on the initialization set $W_\circ.$

The next result provides necessary and sufficient conditions for the 
existence of a static-feedback optimization algorithm.

\begin{theorem}[\textbf{\textit{Solvability of the parameter-feedback tracking problem}}]
\label{thm:existence_parameter_feedback}
Let Assumptions~\ref{as:lipschitz_convexity}--\ref{as:stabilizability_detectability} 
hold. Problem~\ref{prob:parameter_feedback} is solvable if and only if there 
exist $C^2$ mappings 
$\pi: W_\circ \to X$ and $\gamma: W_\circ \to \real^m,$
where $W_\circ\subset W$ is some neighborhood of the origin of $\real^p$, such 
that:
\begin{subequations}
\label{eq:par_feedback_existence}
\begin{align}
\frac{\partial \pi}{\partial w} s(w) &= f(\pi(w),\gamma(w), w),\label{eq:par_feedback_existence_a}\\
0 &= \nabla_u \phi(\gamma(w),w), \label{eq:par_feedback_existence_b}
\end{align}
\end{subequations}
hold at all limit points $w \in \Omega(W_\circ)$.~
\QEDB\end{theorem}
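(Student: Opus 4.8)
The plan is to prove the two implications separately, framing the problem as an instance of the classical nonlinear output regulation framework (as in~\cite{AI-CIB:90}) in which the regulated output is the gradient error $g(t)=\nabla_u\phi(u(t),w(t))$. The central object is the invariant manifold on which $g(t)$ vanishes identically.

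\emph{Necessity.} Suppose $H_c$ solves Problem~\ref{prob:parameter_feedback}. By~\ref{prob:parameter_feedback_a} the autonomous system~\eqref{eq:autonomous_staticfeedback} is locally exponentially stable, so the Jacobian $A+BH_{c,x}$ (where $H_{c,x}$ is the Jacobian of $H_c$ with respect to $x$ at the equilibrium) is Hurwitz. I would then invoke the center manifold machinery reviewed in Appendix~\ref{sec:center_manifold}: for the cascade formed by the exosystem $\dot w=s(w)$ driving $\dot x=f(x,H_c(x,w),w)$, the $w$-dynamics contribute only center/unstable modes (no asymptotically stable modes, by the standing reduction after Assumption~\ref{as:exosystem}), while the $x$-dynamics linearized about $(x_\circ^\star,0)$ are exponentially stable. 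Hence there is a locally defined invariant manifold $x=\pi(w)$, of class $C^2$ provided $f$ and $H_c$ have enough smoothness (here one needs $H_c\in C^2$, which is part of what the theorem is implicitly asserting is available when the problem is solvable — I would note this regularity point carefully), with $\pi(0)=x_\circ^\star$, satisfying the invariance PDE obtained by differentiating $x(t)=\pi(w(t))$ along trajectories: $\frac{\partial\pi}{\partial w}s(w)=f(\pi(w),\gamma(w),w)$ where $\gamma(w)\defeq H_c(\pi(w),w)$. This is~\eqref{eq:par_feedback_existence_a}. For~\eqref{eq:par_feedback_existence_b}: by local exponential stability, every trajectory starting in the neighborhood $\Upsilon$ converges to the manifold, so $\operatorname{dist}\big((x(t),w(t)),\{(\pi(w),w):w\in W\}\big)\to 0$. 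Condition~\ref{prob:parameter_feedback_b} forces $g(t)\to 0$ along every such trajectory. Now take any limit point $\bar w\in\Omega(W_\circ)$; on the manifold $g(t)=\nabla_u\phi(H_c(\pi(w(t)),w(t)),w(t))=\nabla_u\phi(\gamma(w(t)),w(t))$, and continuity of $\nabla\phi$ (Assumption~\ref{as:lipschitz_convexity}) together with the fact that trajectories on the manifold accumulate at $\bar w$ yields $\nabla_u\phi(\gamma(\bar w),\bar w)=0$. Actually the cleanest route is: since trajectories off the manifold converge to it and $g\to 0$, and $w(t)$ visits every neighborhood of $\bar w$ infinitely often, the continuous function $w\mapsto\nabla_u\phi(\gamma(w),w)$ must vanish at $\bar w$; this gives~\eqref{eq:par_feedback_existence_b} on $\Omega(W_\circ)$.

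\emph{Sufficiency.} Conversely, suppose $\pi,\gamma\in C^2$ solving~\eqref{eq:par_feedback_existence} on $\Omega(W_\circ)$ exist. I would first extend $\gamma$ and $\pi$ to a full neighborhood of the origin (they are assumed defined on $W_\circ$) and then \emph{construct} $H_c$ explicitly. The natural choice is $H_c(x,w)=\gamma(w)+K\big(x-\pi(w)\big)$ where $K\in\real^{m\times n}$ is a gain chosen so that $A+BK$ is Hurwitz — possible by the stabilizability part of Assumption~\ref{as:stabilizability_detectability}. One checks: (i) at $(x_\circ^\star,0)$ one has $\pi(0)=x_\circ^\star$ and $\gamma(0)=u_\circ^\star$ (the latter from~\eqref{eq:par_feedback_existence_b} at $w=0$ together with local uniqueness of $u_\circ^\star$), so $H_c(x_\circ^\star,0)=u_\circ^\star$ as required. (ii) For~\ref{prob:parameter_feedback_a}: with $w\equiv 0$ the closed loop is $\dot x=f(x,\gamma(0)+K(x-\pi(0)),0)$; its linearization at $x_\circ^\star$ is $A+BK$, Hurwitz, hence local exponential stability. (iii) For~\ref{prob:parameter_feedback_b}: define the error $e(t)\defeq x(t)-\pi(w(t))$. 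Differentiating and using~\eqref{eq:par_feedback_existence_a}, $\dot e=f(x,\gamma(w)+Ke,w)-f(\pi(w),\gamma(w),w)$, which vanishes when $e=0$ and whose linearization in $e$ (uniformly over the bounded exosystem trajectory, near the origin) is governed by $A+BK$ — so $e(t)\to 0$ locally exponentially, by a standard converse-Lyapunov / regular-perturbation argument using that $w(t)$ stays in a small neighborhood of $\Omega(W_\circ)$ and that $\Omega(W_\circ)$ lies near the origin by Assumption~\ref{as:exosystem}. Finally, since $g(t)=\nabla_u\phi(\gamma(w(t))+Ke(t),w(t))$, $e(t)\to 0$, $w(t)\to\Omega(W_\circ)$, and $\nabla_u\phi(\gamma(\cdot),\cdot)\equiv 0$ on $\Omega(W_\circ)$ with $\nabla\phi$ Lipschitz, we get $g(t)\to 0$. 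This establishes~\ref{prob:parameter_feedback_b}.

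\emph{Main obstacle.} The delicate step is the attractivity argument in (iii) of sufficiency — namely, showing $e(t)\to 0$ and $g(t)\to 0$ when $w(t)$ is not constant but only converges to the limit set $\Omega(W_\circ)$, which may contain limit cycles rather than equilibria. One cannot simply linearize about a single point; instead one needs a Lyapunov function for the $e$-subsystem that is valid uniformly along the (bounded) exosystem trajectory, exploiting that $\Omega(W_\circ)$ can be made arbitrarily close to the origin by shrinking $W_\circ$, so the perturbation of $A+BK$ induced by nonzero $w$ is small. The corresponding subtlety on the necessity side is justifying the $C^2$ regularity and local existence of the center manifold $\pi$ — this is where Appendix~\ref{sec:center_manifold} is invoked — and arguing that the invariance identity, a priori only valid on the manifold through the flow, yields the pointwise PDE~\eqref{eq:par_feedback_existence_a} precisely at limit points (it need not hold at transient points of $W_\circ$ that are not themselves recurrent, which is exactly why the conclusion is phrased over $\Omega(W_\circ)$ rather than all of $W_\circ$).
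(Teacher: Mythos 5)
Your proposal is correct and follows essentially the same route as the paper: both directions hinge on the explicit construction $H_c(x,w)=\gamma(w)+K(x-\pi(w))$ with $A+BK$ Hurwitz, a center-manifold argument for necessity, and local attractivity of the invariant manifold for sufficiency. The only difference is that the paper delegates both implications to Theorem~\ref{thm:characterization_parameter_feedback} (whose proof contains exactly the center-manifold and attractivity steps you spell out), whereas you inline those arguments.
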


The proof of this result builds on 
Theorem~\ref{thm:characterization_parameter_feedback} (presented shortly 
below), and hence is postponed to the appendix.

Theorem~\ref{thm:existence_parameter_feedback} asserts that the 
solvability of the static-feedback optimization problem depends upon the 
existence of two mappings $x=\pi(w)$ and $u=\gamma(w)$ that make the gradient 
of the cost identically zero (cf.~\eqref{eq:par_feedback_existence_b}) and that 
relate the plant dynamics $f(x,u,w)$ with the exosystem $s(w)$  in a 
neighborhood of each limit point of the disturbance 
(cf.~\eqref{eq:par_feedback_existence_a}). Interestingly, the solvability 
of~\eqref{eq:par_feedback_existence} can be related to the existence of zero 
dynamics~\cite{AI:13} for a composite system that incorporates the plant and 
the exosystem, as discussed in the following remark.

\begin{remark}[\textbf{\textit{Interpretation of~\eqref{eq:par_feedback_existence} in 
terms of zero dynamics}}]
\label{rem:zero_dynamics}
Conditions~\eqref{eq:par_feedback_existence} state that an algorithm that 
solves Problem~\ref{prob:parameter_feedback} exists if and only if there 
exists a submanifold of the state space (i.e., $M_s = \{(x,w): x = \pi(w)\}$) 
such that:
\begin{enumerate}[(i)]
\item for some choice of feedback law $u(t) = \gamma(w(t))$, the trajectories of the closed-loop 
system~\eqref{eq:copled_system_parameter_feedback} starting in this manifold 
remain in this manifold (cf.~\eqref{eq:par_feedback_existence_a}) 
\item  the corresponding gradient is identically zero 
(cf.~\eqref{eq:par_feedback_existence_b})
\item the flow of the zero dynamics on this invariant manifold is a 
diffeomorphic copy of the disturbance flow
(cf.~\eqref{eq:par_feedback_existence_a})
\end{enumerate}
It follows from (i)--(ii) that $M_s$ must be contained in the zero dynamics 
manifold of the composite system that incorporates the plant and the 
exosystem; in addition, (iii) requires that the flow of the zero dynamics of 
the composite system on $M_s$ must be a diffeomorphic copy of the flow of the exosystem.~
\QEDB\end{remark}

While Theorem~\ref{thm:existence_parameter_feedback} provides a set of 
conditions for the solvability of the static exact asymptotic tracking problem, 
the question of how to design such an algorithm remains unanswered. 
As an intermediate step to address this question, we present the following 
result, which characterizes the class of all static-feedback algorithms 
that achieve asymptotic tracking.

\begin{theorem}[\textbf{\textit{Characterization of static-feedback optimization algorithms}}]
\label{thm:characterization_parameter_feedback}
%
%
Let Assumptions~\ref{as:lipschitz_convexity}--\ref{as:stabilizability_detectability}
hold, and assume that $H_c(x,w)$ is such that condition 
\ref{prob:parameter_feedback_a} is met. 
Then, \ref{prob:parameter_feedback_b} holds if and only if there 
exists a $C^2$ mapping $\pi: W_\circ \to X,$ with $W_\circ\subset W$ some 
neighborhood of the origin of $\real^p$, such  that:
\begin{subequations}
\label{eq:par_feedback}
\begin{align}
\frac{\partial \pi}{\partial w} s(w) &= f(\pi(w),H_c(\pi(w),w),w),\label{eq:par_feedback_a}\\
0 &= \nabla_u \phi(H_c(\pi(w),w),w), \label{eq:par_feedback_b}
\end{align}
\end{subequations}
hold at all limit points $w \in \Omega(W_\circ)$.~
\QEDB\end{theorem}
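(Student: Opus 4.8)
The plan is to prove Theorem~\ref{thm:characterization_parameter_feedback} by invoking center manifold theory applied to the closed-loop system~\eqref{eq:copled_system_parameter_feedback} under the stabilizing feedback $H_c$, following the classical output-regulation template of~\cite{AI-CIB:90}. First I would set up coordinates by defining $\tilde x \defeq x - x_\circ^\star$, so that the autonomous subsystem $\dot x = f(x, H_c(x,0),0)$ has a locally exponentially stable equilibrium at $\tilde x = 0$ by~\ref{prob:parameter_feedback_a}; its Jacobian is therefore Hurwitz. Appending the exosystem $\dot w = s(w)$, whose linearization $S$ has all eigenvalues in $\complex_{\geq}$ (after discarding asymptotically stable modes, as stipulated in Assumption~\ref{as:exosystem}), the full system~\eqref{eq:copled_system_parameter_feedback} has a splitting into a stable part (the $\tilde x$ dynamics) and a center part (the $w$ dynamics) at the origin. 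By the center manifold theorem (reviewed in Appendix~\ref{sec:center_manifold}), there exists a locally invariant manifold $M_s = \{(\tilde x, w) : \tilde x = \pi(w) - x_\circ^\star\}$ through the origin, with $\pi$ of class $C^2$ (inheriting smoothness from $f$, $H_c$, $s$ — here one needs $H_c$ smooth enough, or one works with the stated $C^0$ regularity and argues $\pi$ exists with the claimed regularity under the smoothness actually assumed on the data), and this manifold is locally exponentially attractive.

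The forward direction ($\Leftarrow$): given $\pi$ satisfying~\eqref{eq:par_feedback_a}–\eqref{eq:par_feedback_b} at all limit points, I would argue that~\eqref{eq:par_feedback_a} is exactly the invariance PDE for the graph $\tilde x = \pi(w)-x_\circ^\star$ to be an invariant manifold of~\eqref{eq:copled_system_parameter_feedback}. Because the center manifold is locally attractive and the exosystem trajectories are bounded and converge to $\Omega(W_\circ)$, every trajectory starting in a neighborhood $\Upsilon$ of $(x_\circ^\star,0)$ satisfies $x(t) - \pi(w(t)) \to 0$ as $t\to\infty$; since $\omega$-limit points of $w(t)$ lie in $\Omega(W_\circ)$ where~\eqref{eq:par_feedback_b} gives $\nabla_u\phi(H_c(\pi(w),w),w)=0$, and using continuity of $\nabla\phi$ (Assumption~\ref{as:lipschitz_convexity}) together with continuity of $H_c$ and of the composition, one concludes $g(t) = \nabla_u\phi(H_c(x(t),w(t)),w(t)) \to 0$. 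The key subtlety here is that~\eqref{eq:par_feedback_b} is only assumed on the limit set, not on a full neighborhood, so the argument must genuinely use attractivity of the center manifold plus the limit-set structure, rather than pointwise vanishing of $g$ along all of $\Upsilon$.

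The reverse direction ($\Rightarrow$): assuming~\ref{prob:parameter_feedback_b} holds, I would take $\pi$ to be the center manifold map furnished by the center manifold theorem applied to~\eqref{eq:copled_system_parameter_feedback}; invariance of the center manifold immediately yields~\eqref{eq:par_feedback_a}. For~\eqref{eq:par_feedback_b}, I would restrict attention to an initial condition on the center manifold with $w(0) = w_\circ$, so that $x(t) = \pi(w(t))$ for all $t\ge 0$; then $g(t) = \nabla_u\phi(H_c(\pi(w(t)),w(t)),w(t))$, and~\ref{prob:parameter_feedback_b} forces $g(t)\to 0$. Evaluating along a sequence $w(k_i)\to\bar w\in\Omega(w_\circ)$ and using continuity shows $\nabla_u\phi(H_c(\pi(\bar w),\bar w),\bar w)=0$, i.e.~\eqref{eq:par_feedback_b} holds at every limit point. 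One must also check that one can always choose an initial condition exactly on the manifold — which is immediate since the manifold is a graph over $w$ — and that the limit points $\bar w$ obtained this way exhaust $\Omega(W_\circ)$ as $w_\circ$ ranges over $W_\circ$.

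The main obstacle I anticipate is regularity bookkeeping around the center manifold: guaranteeing that $\pi$ is $C^2$ (as the statement requires) given only $C^1$ data for $f,c$ and $C^0$ for $H_c$ is delicate — center manifolds are typically only as smooth as the reduction allows, and may not even be unique. I expect the paper handles this either by implicitly strengthening smoothness of $H_c$ in the design (the algorithms constructed later are smooth), or by a more careful argument exploiting the exponential-stability hypothesis in~\ref{prob:parameter_feedback_a} (which gives a spectral gap and hence improved smoothness of the invariant manifold), and by arguing that although the center manifold need not be unique, all choices agree on the limit set $\Omega(W_\circ)$, which is all that~\eqref{eq:par_feedback} constrains. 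Handling non-uniqueness cleanly — showing the conditions~\eqref{eq:par_feedback} are independent of which center manifold is chosen — is the subtle technical point.
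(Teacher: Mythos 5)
Your proposal follows essentially the same route as the paper: both directions rest on the center manifold existence and local attractivity theorems (Theorems~\ref{thm:existence_center_manifold} and~\ref{thm:local_attractivity_manifold}) applied to the closed-loop system~\eqref{eq:closed_loop_static_2}, with~\eqref{eq:par_feedback_a} read off as the invariance PDE and~\eqref{eq:par_feedback_b} obtained by passing to the limit along convergent subsequences $w(k_i)\to\bar w\in\Omega(W_\circ)$. The regularity and non-uniqueness subtleties you flag are real and are glossed over in the paper's proof, which simply asserts a continuous $\pi$ from the center manifold theorem, so your more explicit treatment of those points is if anything more careful than the original.
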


\begin{proof}
{\it (Only if)} Suppose $g(t) \rightarrow 0$ as $t \rightarrow \infty$; we will 
show that~\eqref{eq:par_feedback} holds. 
The closed-loop system~\eqref{eq:interconnected_system} has the form:
\begin{align}\label{eq:closed_loop_static_2}
\dot x &= (A+BK) x + (P+BL) w + \phi(x,w), \notag\\
\dot w &= Sw + \psi(w),
\end{align}
where $A, B, P, S$ are defined in \eqref{eq:jacobians}, 
$K \defeq \left[\frac{\partial H_c }{\partial x}\right]_{(x,w)=(x_\circ^\star,0)},$
$L \defeq \left[\frac{\partial H_c }{\partial w}\right]_{(x,w)=(x_\circ^\star,0)},$
and $\phi(x,w)$ and $\psi(w)$ are functions that vanish at 
$(x,w)=(x_\circ^\star,0),$ together with their first-order derivatives. 
By assumption, the eigenvalues of $(A+B K)$ are in $\complex_{<}$, and those 
of $S$ are on the imaginary axis. By 
Theorem~\ref{thm:existence_center_manifold}, the system admits a center 
manifold at $(x_\circ^\star,0)$, which can be represented as the graph of a
continuous mapping  $x = \pi(w),$ with $\pi(w)$
satisfying~\eqref{eq:par_feedback_a}. This 
establishes~\eqref{eq:par_feedback_a}. 

To establish~\eqref{eq:par_feedback_b}, note that, under 
Assumption~\ref{as:exosystem}, there exists a neighborhood $W_\circ$ of the origin 
such that every trajectory of~\eqref{eq:exosystem} initialized in $W_\circ$ remains 
bounded. Consequently, each such trajectory admits a subsequence that converges 
to a limit point $\bar w \in \Omega(W_\circ).$
Furthermore, we have:
\begin{align*}
\lim _{i \rightarrow \infty} g({k_i})
&=\lim _{i \rightarrow \infty} \nabla_u \phi\left(H_c\left(\pi(w(k_i)),w(k_i)\right), w(k_i)\right)\\
&=\nabla_u \phi\left(H_c\left(\pi(\bar w),\bar w\right), \bar w\right).
\end{align*}
When $\lim _{i \rightarrow \infty} g({k_i}) =0,$ the left-hand side is zero, 
which implies that~\eqref{eq:par_feedback_b} holds at $\bar w.$
Since this must hold at every limit point $\bar w$ and by {continuity of 
$H_c(\cdot, \cdot),$ \eqref{eq:par_feedback_b} must hold everywhere in 
a neighborhood of each point of $\Omega(W_\circ)$.}

{\it (Only if)} We now prove that \eqref{eq:par_feedback} implies 
$\lim_{t \to \infty} g(t)=0.$ By Theorem~\ref{thm:local_attractivity_manifold}, 
the center manifold $x =\pi(w)$ is locally 
attractive; namely, $x(t) \to \pi(w(t))$ as $t\to \infty$. Then, the fulfillment
of~\eqref{eq:par_feedback_b} guarantees that $g(t) \to 0$.
\end{proof}

Theorem~\ref{thm:characterization_parameter_feedback} provides a complete 
characterization of the class of static-feedback optimization algorithms that 
achieve exact asymptotic tracking. 
In words, a control algorithm~\eqref{eq:parameter_feedback} 
satisfying~\ref{prob:parameter_feedback_a} asymptotically tracks a critical 
trajectory if and only if, on a neighborhood of the 
limit set of the exosystem  $\Omega(W_\circ)$, the composite function 
$H_c(\cdot,w) \circ \pi(\cdot)$ zeros the gradient
(cf.~\eqref{eq:par_feedback_b}) and the plant $f(x,u,w)$ is algebraically 
related to the disturbance $s(w)$ as described by~\eqref{eq:par_feedback_a}.
Finally, it is worth commenting on the assumption
that $H_c(x,w)$ satisfies~\ref{prob:parameter_feedback_a} in the statement of 
Theorem~\ref{thm:characterization_parameter_feedback}: conditions for the 
existence of such mappings are given in 
Theorem~\ref{thm:existence_parameter_feedback} and a technique to design such 
$H_c(x,w)$ is given shortly below (see~\eqref{eq:expression_Hc}).
In this context, the value of 
Theorem~\ref{thm:characterization_parameter_feedback} lies in the 
conditions~\eqref{eq:par_feedback}, which will be used below to 
design $H_c(x,w)$ in place of~\ref{prob:parameter_feedback_b}.

The proofs of Theorems~\ref{thm:existence_parameter_feedback}--\ref{thm:characterization_parameter_feedback} are constructive, 
as they provide a method to design control algorithms that solve the 
asymptotic tracking problem (Problem~\ref{prob:parameter_feedback}). 
Explicitly, given mappings $x=\pi(w)$ and $u = \gamma(w)$ satisfying 
\eqref{eq:par_feedback_existence}, a parameter feedback algorithm that solves 
Problem~\ref{prob:parameter_feedback} is given by:
\begin{align}\label{eq:expression_Hc}
    H_c(x,w) = \gamma(w) + K(x - \pi(w)),
\end{align}
where $K$ is any matrix such that the eigenvalues of $A+BK$ are in 
$\complex_{<}.$ 
It is also worth noting that the control action in~\eqref{eq:expression_Hc} is 
the superposition of two terms: a state-error action $K(x - \pi(w)),$ 
responsible for stabilizing the plant around the manifold $\pi(w)$, and a 
control action $\gamma(w),$ responsible for zeroing the gradient. 
We illustrate the design procedure next.

\begin{example}[\textbf{\textit{Illustration of the design procedure for static-feedback optimization algorithms}}]
\label{ex:linear_quadratic_pt2}
Consider the control problem discussed in Example~\ref{ex:linear_quadratic}.
By application of Theorem~\ref{thm:existence_parameter_feedback}, there exists 
a state-feedback optimization algorithm achieving exact asymptotic tracking
if and only if there exist matrices 
$\Pi \in \real^{n \times p}, \Gamma \in \real^{m \times p}$ such that the 
following identities hold:
\begin{align}\label{eq:linear_static_feedback_solvability}
\Pi S &= A \Pi + B \Gamma +  P,\nonumber\\
0 &= R \Gamma + T.
\end{align}
Note that, in this case, the dependence on $w$ can be dropped, since these 
identities must hold anywhere in a neighborhood of the origin of $\real^p.$
When these two conditions hold, an algorithm solving 
Problem~\ref{prob:parameter_feedback} can be 
computed from~\eqref{eq:expression_Hc}, yielding
\begin{align*}
u(t) &= \Gamma w(t) + K(x(t) - \Pi w(t)) ,
\end{align*}
where $K$ is any matrix such that $A+BK$ is Hurwitz. 
\QEDB\end{example}

\begin{remark}[\textbf{\textit{Knowledge of the limit set}}]
In applications, the limit set 
$\Omega(W_\circ)$ may be unknown. When this is the case, 
to design a static-feedback optimization algorithm, it is possible to seek 
mappings $\gamma(w)$ and $\pi(w)$ that verify~\eqref{eq:par_feedback} on some 
set that includes $\Omega(W_\circ)$, which can be more easily 
determined (e.g., when $w(t)$ is periodic or uniformly bounded).
This approach overcomes the need 
for knowing the limit set precisely.~
\QEDB\end{remark}

\section{The dynamic feedback-optimization problem: existence and conditions for asymptotic tracking}
\label{sec:dynamic_feedback}

In this section, we formalize the effectiveness of a controller architecture 
based on the two independent components
\ref{cond:separation1}--\ref{cond:separation2}.
Precisely, we now show how the conclusions drawn in 
Section~\ref{sec:static_feedback} extend when the dynamic 
controller~\eqref{eq:controller_equatons} is used in place of the static 
one~\eqref{eq:parameter_feedback}.
We begin with the following solvability result.

\begin{theorem}[\textbf{\textit{Solvability of the dynamic feedback-optimization problem}}]
\label{thm:existence_dynamic_feedback}
Let Assumptions~\ref{as:lipschitz_convexity}--\ref{as:stabilizability_detectability} 
hold. Problem~\ref{prob:feedback_opt_formal} is solvable if and only if there 
exist $C^2$ mappings $\pi: W_\circ \to X$ and $\gamma: W_\circ \to \real^m,$ 
where $W_\circ\subset W$ is some neighborhood of the origin of $\real^p,$ such 
that:
\begin{subequations}
\label{eq:dyn_feedback_existence}
\begin{align}
\frac{\partial \pi}{\partial w} s(w) &= f(\pi(w),\gamma(w), w), \label{eq:dyn_feedback_existence_a}\\
0 &= \nabla_u \phi(\gamma(w),w), \label{eq:dyn_feedback_existence_b}
\end{align}
\end{subequations}
hold at all limit points $w \in \Omega(W_\circ)$.~
\QEDB\end{theorem}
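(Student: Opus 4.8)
The plan is to reduce the dynamic output-feedback problem to its static counterpart treated in Section~\ref{sec:static_feedback}. For \emph{necessity}, I would argue directly on the interconnected system~\eqref{eq:interconnected_system} using center manifold theory, exactly as in the proof of Theorem~\ref{thm:characterization_parameter_feedback} but with the controller state appended. For \emph{sufficiency}, I would exhibit a concrete controller having the separation-principle structure~\ref{cond:separation1}--\ref{cond:separation2}: a static feedback law of the type~\eqref{eq:expression_Hc} driven by a Luenberger-type observer of the plant--exosystem cascade, and show that certainty equivalence preserves both stability and exact tracking.

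\emph{Necessity.} Suppose some controller~\eqref{eq:controller_equatons} solves Problem~\ref{prob:feedback_opt_formal}. The autonomous system~\eqref{eq:interconnected_system} has state $(x,z,w)$, and its Jacobian at $(x_\circ^\star,z_\circ^\star,0)$ is block triangular with respect to the splitting $(x,z)\oplus w$, with diagonal blocks the matrix~\eqref{eq:aux_jacobian} and $S$. By~\ref{def:tracking_a} the $(x,z)$-block is Hurwitz, while by Assumption~\ref{as:exosystem} (with asymptotically stable modes removed) $\sigma(S)$ lies on the imaginary axis; hence the center subspace is the $w$-subspace. By Theorem~\ref{thm:existence_center_manifold} the system admits a center manifold through the equilibrium that is the graph of a mapping $(x,z)=(\pi(w),\varpi(w))$. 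Invariance of this manifold under the $x$-dynamics gives $\frac{\partial\pi}{\partial w}s(w)=f(\pi(w),G_c(\varpi(w)),w)$, so~\eqref{eq:dyn_feedback_existence_a} holds with $\gamma(w)\defeq G_c(\varpi(w))$. By Theorem~\ref{thm:local_attractivity_manifold}, trajectories started near the equilibrium satisfy $u(t)-\gamma(w(t))\to 0$, and by continuity of $\nabla_u\phi$ (Assumption~\ref{as:lipschitz_convexity}) then $g(t)-\nabla_u\phi(\gamma(w(t)),w(t))\to 0$; combining with~\ref{def:tracking_b} forces $\nabla_u\phi(\gamma(w(t)),w(t))\to 0$, and evaluating along sequences converging to an arbitrary limit point yields~\eqref{eq:dyn_feedback_existence_b} on $\Omega(W_\circ)$.

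\emph{Sufficiency.} Given $\pi,\gamma$ satisfying~\eqref{eq:dyn_feedback_existence}, choose $K$ with $A+BK$ Hurwitz and $L=\bmat{L_1\\ L_2}$ with $A_L-LC_L$ Hurwitz; both exist by Assumption~\ref{as:stabilizability_detectability} (stabilizability of $(A,B)$ and detectability of $(C_L,A_L)$, respectively). Take the controller state $z=(\hat x,\hat w)$ and
\begin{align*}
\dot{\hat x} &= f(\hat x,u,\hat w) + L_1\bigl(y - c(\hat x,\hat w)\bigr), \\
\dot{\hat w} &= s(\hat w) + L_2\bigl(y - c(\hat x,\hat w)\bigr), \\
u &= \gamma(\hat w) + K\bigl(\hat x - \pi(\hat w)\bigr),
\end{align*}
i.e., $G_c(z)=\gamma(\hat w)+K(\hat x-\pi(\hat w))$. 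In the coordinates $(x,e,w)$ with $e\defeq(\hat x-x,\hat w-w)$, the closed-loop Jacobian at the equilibrium is block triangular with diagonal blocks $A+BK$, $A_L-LC_L$, and $S$: setting $w\equiv 0$ makes it Hurwitz, so~\ref{def:tracking_a} holds, and the center subspace is again the $w$-subspace. Moreover $\{e=0\}$ is invariant for the closed loop, and the restriction of the closed loop to $\{e=0\}$ coincides with the static system~\eqref{eq:copled_system_parameter_feedback} for $H_c(x,w)=\gamma(w)+K(x-\pi(w))$; since $\pi,\gamma$ satisfy~\eqref{eq:par_feedback_existence}, the argument of Theorem~\ref{thm:characterization_parameter_feedback} shows this restricted system has the locally attractive center manifold $x=\pi(w)$. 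Hence the set $\mathcal{M}\defeq\{(x,e,w):e=0,\ x=\pi(w)\}$ is invariant, is a graph over $w$, and is tangent to the center subspace at the origin (by the linearization of~\eqref{eq:dyn_feedback_existence_a}), so it is a center manifold of the full closed loop; by Theorem~\ref{thm:local_attractivity_manifold} it is locally attractive. Consequently $e(t)\to 0$ and $x(t)-\pi(w(t))\to 0$, whence $u(t)-\gamma(w(t))\to 0$; using continuity of $\nabla_u\phi$, $w(t)\to\Omega(W_\circ)$, and~\eqref{eq:dyn_feedback_existence_b} then gives $g(t)\to 0$, so~\ref{def:tracking_b} holds and Problem~\ref{prob:feedback_opt_formal} is solved.

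\emph{Main obstacle.} The delicate point is the sufficiency claim of \emph{exact} (not merely approximate) tracking under certainty equivalence: one must verify that the observer is constructed so that $\{e=0\}$ is \emph{exactly} invariant (the plant and observer copies agree there, so no cross terms in $(x,w)$ survive), check the tangency of $\mathcal{M}$ to the center eigenspace via the linearized regulator equation~\eqref{eq:dyn_feedback_existence_a}, and then invoke local attractivity of center manifolds (Theorem~\ref{thm:local_attractivity_manifold}) to lift the static-case conclusion of Theorem~\ref{thm:characterization_parameter_feedback} to the augmented plant--observer--exosystem system. A secondary technicality is confining the relevant trajectories to the neighborhood on which $\pi$, $\gamma$, and the observer estimates remain valid, which is handled by shrinking the neighborhood $\Upsilon$ of Definition~\ref{def:tracking} and exploiting the local exponential stability established in~\ref{def:tracking_a}.
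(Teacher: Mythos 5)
Your proposal is correct and follows essentially the same route as the paper: necessity via the center-manifold argument underlying Theorem~\ref{thm:characterization_dynamic_feedback} (setting $\gamma = G_c\circ\sigma$), and sufficiency via the certainty-equivalence observer construction of Algorithm~\ref{alg:dynamic_feedback_design}, with stability checked by block-triangularizing the closed-loop Jacobian into $A+BK$, $A_L-LC_L$, and $S$. Your explicit verification that $\{e=0,\ x=\pi(w)\}$ is an attractive center manifold is just the unpacked form of the paper's appeal to Theorem~\ref{thm:characterization_dynamic_feedback} with $\sigma(w)=(\pi(w),w)$, and your tangency/Sylvester-uniqueness check correctly fills in the details the paper leaves implicit.
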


The proof of this result builds on 
Theorem~\ref{thm:characterization_dynamic_feedback} (presented shortly 
below); hence, we postpone it to the appendix.

Interestingly, the conditions for the solvability of the static problem 
(Problem~\ref{prob:parameter_feedback}) derived in 
Theorem~\ref{thm:existence_parameter_feedback} and those for the solvability 
of the dynamic problem (Problem~\ref{prob:feedback_opt_formal}) derived in 
Theorem~\ref{thm:existence_dynamic_feedback} are identical. 
This shall not be surprising since, on the one hand, the static optimization 
algorithm~\eqref{eq:parameter_feedback} has access to measurements of the 
signals $x(t)$ and $w(t)$ while, on the other hand, the dynamic optimization 
algorithm~\eqref{eq:controller_equatons} has additional flexibility thanks to 
the availability of a dynamic state $z(t)$. We will reinterpret this property 
under the lens of additional results shortly below 
(see~\eqref{eq:change_of_coordinates}). 

The following result provides a characterization of the class of all 
dynamic-feedback optimization algorithms that achieve exact asymptotic 
tracking.

\begin{theorem}[\textbf{\textit{Characterization of dynamic feedback-optimization algorithms}}]
\label{thm:characterization_dynamic_feedback}
Let Assumptions~\ref{as:lipschitz_convexity}--\ref{as:stabilizability_detectability}
hold, and assume that the controller~\eqref{eq:controller_equatons} is such 
that condition~\ref{def:tracking_a} is met. Then, \ref{def:tracking_b}
holds if and only if there exist $C^2$ mappings 
$\pi: W_\circ \to X$ and $\sigma: W_\circ \to Z,$ with 
$W_\circ\subset W$ some neighborhood of the origin of $\real^p$, such  that:
\begin{subequations}\label{eq:dyn_feedback}
\begin{align}
\frac{\partial \pi}{\partial w} s(w) &= f(\pi(w),G_c(\sigma(w)), w), \label{eq:dyn_feedback_a}\\
\frac{\partial \sigma }{\partial w} s(w) &= F_c(\sigma(w),c(\pi(w),w)), \label{eq:dyn_feedback_b}\\
0 &= \nabla_u \phi(G_c(\sigma(w)),w), \label{eq:dyn_feedback_c}
\end{align}
\end{subequations}
hold at all limit points $w \in \Omega(W_\circ)$.~
\QEDB\end{theorem}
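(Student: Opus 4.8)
The plan is to mimic the proof of Theorem~\ref{thm:characterization_parameter_feedback}, replacing the closed-loop system of the plant plus exosystem with the \emph{augmented} closed-loop system of the plant, the controller, and the exosystem, and then invoking center-manifold theory on this larger system. Concretely, write the interconnected dynamics~\eqref{eq:interconnected_system} in the coordinates $(x,z,w)$ and linearize about the equilibrium $(x_\circ^\star,z_\circ^\star,0)$. Using the Jacobians~\eqref{eq:jacobians} and~\eqref{eq:jacobians_ctrl}, the linearization has the block form $\dot\xi = \mathcal{A}\xi + (\text{higher order})$, $\dot w = Sw + \psi(w)$, where $\xi\defeq(x-x_\circ^\star,z-z_\circ^\star)$ and $\mathcal{A}$ is precisely the matrix in~\eqref{eq:aux_jacobian}. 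By hypothesis~\ref{def:tracking_a}, $\mathcal{A}$ is Hurwitz, while the eigenvalues of $S$ lie on the imaginary axis (Assumption~\ref{as:exosystem} together with the reduction to no asymptotically stable modes). Hence Theorem~\ref{thm:existence_center_manifold} applies to the full system: there is a center manifold through $(x_\circ^\star,z_\circ^\star,0)$ expressible as the graph of a $C^2$ pair of maps $x=\pi(w)$, $z=\sigma(w)$ defined on a neighborhood $W_\circ$ of the origin, and the invariance of this manifold under the flow is exactly the pair of partial differential equations~\eqref{eq:dyn_feedback_a}--\eqref{eq:dyn_feedback_b} (the $x$-row gives~\eqref{eq:dyn_feedback_a} with $u=G_c(\sigma(w))$; the $z$-row gives~\eqref{eq:dyn_feedback_b} after substituting $y=c(\pi(w),w)$).

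For the ``only if'' direction, assume $g(t)\to 0$. Along any trajectory initialized in a small enough neighborhood of $(x_\circ^\star,z_\circ^\star,0)$, local attractivity of the center manifold (Theorem~\ref{thm:local_attractivity_manifold}) gives $x(t)-\pi(w(t))\to 0$ and $z(t)-\sigma(w(t))\to 0$, so by continuity of $G_c$, $\nabla_u\phi$, $\pi$, $\sigma$, $g(t) - \nabla_u\phi\big(G_c(\sigma(w(t))),w(t)\big)\to 0$. Picking, for any $\bar w\in\Omega(W_\circ)$, a time sequence $k_i\to\infty$ with $w(k_i)\to\bar w$ (which exists by boundedness of exosystem trajectories, Assumption~\ref{as:exosystem}), passing to the limit yields $0 = \nabla_u\phi\big(G_c(\sigma(\bar w)),\bar w\big)$. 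Since $\bar w$ ranges over $\Omega(W_\circ)$ and all the maps involved are continuous, \eqref{eq:dyn_feedback_c} holds on a neighborhood of $\Omega(W_\circ)$, as claimed. This is verbatim the limit-point argument from the proof of Theorem~\ref{thm:characterization_parameter_feedback}, now applied to the composite state.

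For the ``if'' direction, suppose maps $\pi,\sigma$ solving~\eqref{eq:dyn_feedback} exist. Then $\{(x,z,w): x=\pi(w),\,z=\sigma(w)\}$ is an invariant manifold of~\eqref{eq:interconnected_system} by~\eqref{eq:dyn_feedback_a}--\eqref{eq:dyn_feedback_b}, and since the transverse linearization $\mathcal{A}$ is Hurwitz, Theorem~\ref{thm:local_attractivity_manifold} shows this manifold is locally exponentially attractive: $x(t)\to\pi(w(t))$, $z(t)\to\sigma(w(t))$ for initial conditions in a neighborhood $\Upsilon$ of the equilibrium. Then, for $t$ large, $w(t)$ enters any prescribed neighborhood of $\Omega(W_\circ)$, on which~\eqref{eq:dyn_feedback_c} holds, and therefore $g(t) = \nabla_u\phi(u(t),w(t)) \to \nabla_u\phi\big(G_c(\sigma(w(t))),w(t)\big) = 0$. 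This establishes~\ref{def:tracking_b}.

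The main obstacle I anticipate is the same subtlety that appears in the static case: the center manifold is only guaranteed to exist and be attractive \emph{locally} near the equilibrium $w=0$, whereas the conclusion~\eqref{eq:dyn_feedback_c} must be asserted on a neighborhood of the entire limit set $\Omega(W_\circ)$. Reconciling these requires invoking the boundedness of exosystem trajectories to propagate the local manifold along the flow (so that $\pi,\sigma$ extend to, or are well-defined on, a tube around the closed invariant set $\Omega(W_\circ)$), and then arguing that attractivity is uniform along that tube — this is where the continuity hypotheses on the data and the structure of Assumption~\ref{as:exosystem} (bounded trajectories, no asymptotically stable modes) are essential. A second, more bookkeeping-level point is verifying that the $z$-component of the center-manifold invariance equation genuinely reduces to~\eqref{eq:dyn_feedback_b} once one substitutes $y=c(\pi(w),w)$ into $F_c$; this is routine given the controller structure~\eqref{eq:controller_equatons}, but must be stated carefully because $F_c$ sees the output, not the state.
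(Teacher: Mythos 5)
Your proposal is correct and follows essentially the same route as the paper's own proof: apply the center-manifold existence theorem to the augmented $(x,z,w)$ system (Hurwitz transverse block from~\ref{def:tracking_a}, critical spectrum from $S$) to obtain $\pi,\sigma$ satisfying~\eqref{eq:dyn_feedback_a}--\eqref{eq:dyn_feedback_b}, use the limit-point/continuity argument for~\eqref{eq:dyn_feedback_c} in the ``only if'' direction, and invoke local attractivity of the center manifold for the ``if'' direction. Your version is in fact slightly more explicit than the paper's at the step where $g(t)$ is compared to $\nabla_u\phi(G_c(\sigma(w(t))),w(t))$, and the local-versus-limit-set subtlety you flag is present (and similarly unelaborated) in the paper's argument as well.
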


\begin{proof}
{\it (Only if)} Suppose that 
conditions~\ref{def:tracking_a}–\ref{def:tracking_b} are satisfied. We will now 
show that~\eqref{eq:dyn_feedback} holds.
The closed-loop system~\eqref{eq:copled_system_parameter_feedback} has the form:
\begin{align}\label{eq:closed_loop_dynamic_2}
\dot x &=  Ax + B C_c z + P w + \phi(x,z,w), \notag\\
\dot z &=  B_c C x + A_c z + B_cQ w + \chi(x,z,w), \notag\\
\dot w &= Sw + \psi(w),
\end{align}
where all matrices involved are defined in \eqref{eq:jacobians}, 
\eqref{eq:jacobians_ctrl}, and $\phi(x,z,w)$, $\chi(x,z,w),$ and $\psi(w)$ are 
functions that vanish at $(x,z,w)=(x_\circ^\star,z_\circ^\star,0),$ together 
with their first-order derivatives. 
By~\ref{def:tracking_a}, the eigenvalues of the matrix
\begin{align*}
    \begin{bmatrix}
        A & BC_c\\ B_c C & A_c
    \end{bmatrix}
\end{align*}
are in $\complex_{<}$, and those of $S$ are on the imaginary axis. 
By Theorem~\ref{thm:existence_center_manifold}, the system admits a center 
manifold at $(x_\circ^\star,z_\circ^\star,0)$, which can be expressed as the 
graph of two continuous mappings $x = \pi(w)$ and  $z=\sigma(w)$ that
satisfy~\eqref{eq:dyn_feedback_a} and~\eqref{eq:dyn_feedback_b}. 

To establish~\eqref{eq:dyn_feedback_c}, note that, under 
Assumption~\ref{as:exosystem}, there exists a neighborhood $W_\circ$ of the origin 
such that every trajectory of~\eqref{eq:exosystem} starting in $W_\circ$ is 
bounded, and therefore it has a subsequence that converges to some limit point 
$\bar w \in \Omega(W_\circ)$. 
Thus, we have:
\begin{align*}
\lim _{i \rightarrow \infty} g({k_i})
&=\lim _{i \rightarrow \infty} \nabla_u \phi\left(G_c\left(\sigma(w(k_i))\right), w(k_i)\right),\\
&=\nabla_u \phi\left(G_c\left(\pi(\bar w)\right), \bar w\right).
\end{align*}
When $\lim _{i \rightarrow \infty} g({k_i}) =0,$ the left-hand side is zero, 
which implies that~\eqref{eq:dyn_feedback_c} holds at $\bar w.$
Since this must hold at every limit point $\bar w$ and by continuity of 
$G_c(\cdot),$ \eqref{eq:dyn_feedback_c} must hold everywhere in 
a neighborhood of each point of $\Omega(W_\circ)$. This 
establishes~\eqref{eq:dyn_feedback_c}.

{\it (If)} We now show that, if condition~\ref{def:tracking_a} is 
satisfied and~\eqref{eq:dyn_feedback} holds, then~\ref{def:tracking_b} holds. 
By Theorem~\ref{thm:local_attractivity_manifold}, the center manifold 
$x =\pi(w)$ and $z=\sigma(w)$ are locally attractive; namely, $x(t) \to \pi(w)$ 
and $z(t) \to \sigma(w)$ as $t\to \infty$. Then, fulfillment
of~\eqref{eq:dyn_feedback_b} guarantees that $g(t) \to 0$, thus 
establishing~\ref{def:tracking_b}.
\end{proof}

The conditions in~\eqref{eq:dyn_feedback} fully characterize the class of 
dynamic-feedback algorithms that achieve asymptotic tracking.
In other words, an algorithm from the class~\eqref{eq:interconnected_system} 
satisfying~\ref{def:tracking_a} achieves exact tracking if 
and only if, on a neighborhood of the limit set of the exosystem  
$\Omega(W_\circ)$, the composite function $G_c \circ \sigma$ zeros the 
gradient (cf.~\eqref{eq:dyn_feedback_c}) and both the plant and the 
controller are algebraically related to the exosystem as given by, 
respectively, \eqref{eq:dyn_feedback_a} and \eqref{eq:dyn_feedback_b}.
It is also worth relating 
Theorem~\ref{thm:characterization_dynamic_feedback} with 
Theorem~\ref{thm:characterization_parameter_feedback}: by comparison, it is 
immediate to see that the design of a dynamic algorithm must follow the same 
criteria of the design of a static algorithm (compare 
\eqref{eq:dyn_feedback_a}, \eqref{eq:dyn_feedback_c} with 
\eqref{eq:par_feedback}), now with the additional requirement that 
the controller is algebraically related to the exosystem, as given 
by~\eqref{eq:dyn_feedback_c}. This property establishes the correctness of the 
controller architecture proposed 
in~\ref{cond:separation1}--\ref{cond:separation2}.

It is worth commenting on the requirement 
that~\eqref{eq:controller_equatons} satisfies~\ref{def:tracking_a} in the 
statement: conditions for the existence of an algorithm 
satisfying~\ref{def:tracking_a} are given in 
Theorem~\ref{thm:existence_dynamic_feedback} and a technique to design a 
stabilizing controller is given shortly below (see 
Algorithm~\ref{alg:dynamic_feedback_design}). In analogy with 
Theorem~\ref{thm:characterization_parameter_feedback}, the value of
Theorem~\ref{thm:characterization_dynamic_feedback} lies in the 
conditions~\eqref{eq:dyn_feedback}, which will be used in 
Algorithm~\ref{alg:dynamic_feedback_design} to design $F_c(z,y)$ and $G_c(z)$ 
in place of~\ref{def:tracking_b}.


We illustrate these findings in the following example.




\begin{example}[\textbf{\textit{Illustration of the conditions for 
dynamic feedback-optimization algorithms}}]
\label{ex:linear_quadratic_pt3}
Consider the problem discussed in 
Examples~\ref{ex:linear_quadratic}--\ref{ex:linear_quadratic_pt2}. 
By application of Theorem~\ref{thm:existence_dynamic_feedback}, 
Problem~\ref{prob:feedback_opt_formal} is solvable if and only there exist 
matrices $\Pi \in \real^{n \times p}$ and $\Gamma \in \real^{m \times p}$ such 
that~\eqref{eq:linear_static_feedback_solvability} hold. 
Notice that, as observed immediately after 
Theorem~\ref{thm:existence_dynamic_feedback}, the conditions for the 
solvability of the dynamic problem (Problem~\ref{prob:feedback_opt_formal})
coincide with the conditions for the solvability of the 
static problem (Problem~\ref{prob:parameter_feedback}).

Suppose now that~\eqref{eq:linear_static_feedback_solvability} holds,  
consider the controller~\eqref{eq:controller_linear}, and assume that  
$A_c$ and $B_c$ are such that the origin of:
\begin{align*}
    \dot x(t) &= A x(t) + B C_c z(t), \\
    \dot z(t) &= A_c z(t) + B_c C x(t),
\end{align*}
is exponentially stable. 
By application of Theorem~\ref{thm:characterization_dynamic_feedback}, 
exact asymptotic tracking is achieved if and only if there exist matrices 
$\Pi \in \real^{n \times p}$ and $\Sigma \in \real^{n_c \times p}$ such that:
\begin{align}\label{eq:design_equations_linear}
\Pi S &= A \Pi + B C_c \Sigma + P,\nonumber\\
\Sigma S &= A_c \Sigma +B_c(C \Pi + Q),\nonumber\\
0&= R C_c + T  \Sigma.
\end{align}
Given $A_c, B_c, C_c,$  \eqref{eq:design_equations_linear} is a 
set of linear equations in the unknowns $\Pi$ and $\Sigma,$ which can thus be 
checked efficiently.~
\QEDB\end{example}

We conclude this section by discussing an important implication stemming from 
Theorem~\ref{thm:characterization_dynamic_feedback}.
By~\eqref{eq:dyn_feedback_b}, the state of the control algorithm $z$ and that 
of the exosystem $w$ must be related, everywhere in the limit set $\Omega(W_\circ),$ by the 
relationship:
\begin{align}\label{eq:change_of_coordinates}
z =\sigma(w).
\end{align}
Condition~\eqref{eq:change_of_coordinates} can be interpreted as the existence 
of a coordinate transformation between the state of the exosystem and that of 
the control algorithm. This, in turn, implies that any controller capable of 
achieving exact tracking must use an internal state $z(t)$ that is a 
reduplicated copy of the disturbance signal $w(t),$ albeit potentially 
expressed in a different coordinate system.
We discuss further this property in 
Remarks~\ref{rem:internal_model_principle}--\ref{rem:internal_model_interpret_feedopt}, and illustrate some of its 
properties in Example~\ref{ex:injective_sigma}.

\begin{remark}[\textbf{\textit{The internal model principle of feedback optimization}}]
\label{rem:internal_model_principle}
We interpret condition~\eqref{eq:dyn_feedback} as the \textit{internal model 
principle of feedback optimization,} akin to its counterpart in control 
systems~\cite{ED:76,BF-WW:76,BF:77,AI-CIB:90} and time-varying 
optimization~\cite{GB-BVS:24-arxiv}. 
In fact, \eqref{eq:dyn_feedback} expresses the requirement that any feedback 
optimization algorithm that achieves asymptotic tracking must include an 
internal model of the exosystem.
The use of a copy of the temporal variability of the optimization 
problem is explicit in existing feedback optimization 
algorithms~\cite{GB-JC-JP-ED:21-tcns,AH-SB-GH-FD:20}, where an internal model 
for an integrator-type exosystem is used to reject constant disturbances. In  
this sense, our characterization encompasses all these existing approaches as 
special cases.
\QEDB\end{remark}

\begin{remark}[\textbf{\textit{Internal-model based interpretation of existing 
feedback optimization algorithms}}]
\label{rem:internal_model_interpret_feedopt}
Recall that the basic feedback-optimization 
algorithm~\eqref{eq:feedback_opt_controller} can be 
viewed as an instance of~\eqref{eq:controller_equatons} with 
$F_c(z,y)$ and $G_c(z)$ as in~\eqref{eq:feedback_opt_controller_functions} (see 
Remark~\ref{rem:basig_feedback_opt}).
By direct substitution into~\eqref{eq:dyn_feedback}, it is immediate to 
see that this algorithm 
satisfies~\eqref{eq:dyn_feedback} with $s(w) = 0$ and $\sigma(w)$ 
arbitrary. 
Since $s(w) = 0$ is the internal model of a constant signal, it 
follows from Theorem~\ref{thm:characterization_dynamic_feedback} that these 
algorithms can achieve exact asymptotic tracking only when $w(t)$ is a 
constant signal. This observation is in line with the inexact convergence 
results given for these algorithms when $w(t)$ is 
time-varying~\cite{GB-JC-JP-ED:21-tcns,GB-MV-JC-ED:24-tac}, and motivates the 
need for developing new algorithms when $w(t)$ is time-varying.
\QEDB\end{remark}

\begin{example}[\textbf{\textit{The mapping $\sigma(w)$ may not be injective}}]
\label{ex:injective_sigma}
Consider the scalar plant: $\dot x =  x + u$ with output $y = -2x + w$
subject to a constant disturbance ${\dot w = 0}$. Denoting $w\defeq w(t)$ for compactness, consider the following 
instance of~\eqref{eq:optimization_unconstrained},
\begin{align*}
\underset{u \in \real}{\text{minimize}} ~~~& \tfrac{1}{2} \vert u \vert^2 + 
\tfrac{1}{2} \vert w \vert^2,
\end{align*}
which results in the gradient signal: $g(t) = u(t) + w$.
In this case, a possible choice of algorithm that achieves asymptotic tracking 
is 
$u(t) = y(t).$ Indeed, with this 
choice, the controlled system dynamics are $\dot x(t) = -x(t) +w$, yielding: ${x(t) \to w}$, $y(t) \to -w$, $u(t) \to -w$, and $g(t) \to 0$,
which ensures asymptotic tracking. 
This is a case where $\sigma(w)$ is not injective; in fact, this corresponds to 
a situation where $\sigma:W_\circ \to \emptyset$, since this controller has no 
internal state. Notice that this is not the only controller choice possible, as 
other solutions (for example, using a dynamic state) are possible.~
\QEDB\end{example}

\section{Feedback optimization algorithm design}
\label{sec:alg_design}
Building on the results derived in the previous section, we now turn our 
attention to designing algorithms that solve 
Problem~\ref{prob:feedback_opt_formal}. 
Our controller construction follows the separation principle defined by 
components~\ref{cond:separation1}–\ref{cond:separation2}. Notably, 
component~\ref{cond:separation2} is further structured into two parts: (i) an 
error-feedback mechanism that stabilizes the plant around the center manifold, 
and (ii) a control action that drives the gradient to zero asymptotically 
(see~\eqref{eq:expression_Hc}).

%

Formally, our construction is as follows: given functions $\gamma(w)$ and 
$\pi(w)$ that satisfy~\eqref{eq:dyn_feedback_existence}, we construct a  
controller with state $z=(z_1, z_2),$ $z_1 \in \real^n, z_2 \in \real^p,$ (i.e., the controller state space dimension is $n_c = n + p$)
and dynamics:
\begin{align}\label{eq:controller_dynamics}
    \dot z_1(t) &= f(z_1(t),u(t), z_2(t)) - L_1 e_y(t),\nonumber\\
    \dot z_2(t) &= s(z_2(t)) - L_2 e_y(t),
\end{align}
where
\begin{align*}
u(t) &= \gamma(z_2(t)) + K(z_1(t) - \pi(z_2(t))),\\
e_y(t) &= c(z_1(t), z_2(t))-y(t).
\end{align*}
%
%
Intuitively, the state variable $z_1(t)$ in~\eqref{eq:controller_dynamics} 
operates as a dynamic observer for the plant's state $x(t),$ driven by the 
output-based estimation error $e_y(t)$ (which describes the error 
between the observer's output and true output $y(t)$) with gain $L_1$.
The state $z_2(t)$ acts as a dynamic observer for the exosystem state $w(t),$ 
driven by the output-based estimation error $e_y(t)$ with gain $L_2$.
The control input $u(t)$ is designed to be a static-feedback optimization 
algorithm (cf.~\eqref{eq:expression_Hc}), driven by the estimated states 
$z_1(t)$ and $z_2(t)$ (in place of the true states $x(t)$ and $w(t)$ as 
in~\eqref{eq:expression_Hc}).

In~\eqref{eq:controller_dynamics}, the matrices $K \in \real^{m \times n}, L_1 \in \real^{n \times q},$ and $L_2\in \real^{p \times q}$ are designed such 
that the closed-loop matrices:
\begin{align}\label{eq:controller_dynamics_matrices}
    A+BK && \text{and} &&
    A_L - L C_L,
\end{align}
where $L = [L_1^\tsp, L_2^\tsp]^\tsp$, have eigenvalues\footnote{Notice that 
existence of $K$ and $L$ is guaranteed by 
Assumption~\ref{as:stabilizability_detectability}.} in $\complex_{<}.$
We note that the Hurwitz stability of $A_L - L C_L$ guarantees asymptotic 
stability of the dynamic observer, while the Hurwitz stability of $A + B K$ 
ensures that the state-feedback controller $u(t)$ renders the closed-loop 
system asymptotically stable.

We summarize our algorithm design procedure in 
Algorithm~\ref{alg:dynamic_feedback_design}, and we  prove the effectiveness of 
this method in Theorem~\ref{thm:algorithm_regulation}.

\begin{algorithm}
\caption{Dynamic-feedback optimization algorithm design}
\label{alg:dynamic_feedback_design}
\KwData{Mappings $f(x,u,w),$ $c(x,w),$ $s(w);$
matrices $A,B$ as in~\eqref{eq:jacobians}, $A_L, C_L$ as 
in Proposition~\ref{prop:necessary_conditions_tracking}; mappings $\pi(w), \gamma(w)$ as in Theorem~\ref{thm:existence_dynamic_feedback}}
Let $K$ be any matrix such that $A+BK$ is Hurwitz\;
Let $L$ be any matrix such that $A_L - L C_L$ is Hurwitz\;
Decompose $z=(z_1, z_2), z_1 \in \real^{n}, z_2 \in \real^{p}$ \;
Decompose $L = [L_1^\tsp, L_2^\tsp]^\tsp, L_1 \in \real^{n \times q}, L_2 \in \real^{p \times q}$\;
$G_c(z) \gets \gamma(z_2) + K(z_1 - \pi(z_2))$\;
$F_c(z,y) \gets 
\begin{bmatrix}
    f(z_1,G_c(z), z_2) - L_1 (c(z_1, z_2)-y)\\
    s(z_2) - L_2 ( c(z_1, z_2)-y)
\end{bmatrix}$\;
\KwResult{$F_c(z,y)$, $G_c(z)$, that solve Problem~\ref{prob:feedback_opt_formal}}
\end{algorithm}

\begin{theorem}[\textbf{\textit{Correctness of Algorithm~\ref{alg:dynamic_feedback_design}}}]
\label{thm:algorithm_regulation}
Let Assumptions~\ref{as:lipschitz_convexity}--\ref{as:stabilizability_detectability}
hold, and let the control algorithm~\eqref{eq:controller_equatons} be designed 
following Algorithm~\ref{alg:dynamic_feedback_design}. Then, the controlled 
system~\eqref{eq:interconnected_system}  exactly asymptotically tracks a 
critically trajectory of~\eqref{eq:optimization}.
\QEDB\end{theorem}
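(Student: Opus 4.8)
The plan is to verify, in turn, the two conditions defining exact asymptotic tracking in Definition~\ref{def:tracking}: local exponential stability of the disturbance-free closed loop~\eqref{eq:autonomous_system_w=0} (condition~\ref{def:tracking_a}), and asymptotic vanishing of the gradient error (condition~\ref{def:tracking_b}). I would obtain condition~\ref{def:tracking_a} from a linearization argument that exposes the separation structure of~\eqref{eq:controller_dynamics}, and then deduce condition~\ref{def:tracking_b} by applying the characterization of Theorem~\ref{thm:characterization_dynamic_feedback}. Throughout, existence of gains $K,L$ rendering the matrices in~\eqref{eq:controller_dynamics_matrices} Hurwitz is guaranteed by Assumption~\ref{as:stabilizability_detectability}, and the $C^2$ maps $\pi,\gamma$ satisfying~\eqref{eq:dyn_feedback_existence} are available as input data of Algorithm~\ref{alg:dynamic_feedback_design} (their existence being equivalent, by Theorem~\ref{thm:existence_dynamic_feedback}, to solvability of Problem~\ref{prob:feedback_opt_formal}).

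\emph{Step~1 (condition~\ref{def:tracking_a}).} First I would pin down the equilibrium. Evaluating~\eqref{eq:dyn_feedback_existence} at $w=0$ and using $s(0)=0$ gives $f(\pi(0),\gamma(0),0)=0$ and $\nabla_u\phi(\gamma(0),0)=0$; by Assumption~\ref{as:steadyStateMap} together with local uniqueness of $u_\circ^\star$, this forces $\gamma(0)=u_\circ^\star$ and $\pi(0)=x_\circ^\star$, so for the controller produced by Algorithm~\ref{alg:dynamic_feedback_design} one has $z_\circ^\star=(x_\circ^\star,0)$, $G_c(z_\circ^\star)=u_\circ^\star$, and $F_c(z_\circ^\star,c(x_\circ^\star,0))=0$, consistently with~\eqref{eq:equilibrium}. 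Next I would change to the observer-error coordinates $e_1=z_1-x$ and $e_2=z_2-w$ (here $w\equiv0$, so $e_2=z_2$). The crucial observation is that the control input $u$ enters $\dot e_1$ only through the difference $f(z_1,u,z_2)-f(x,u,w)$, with the \emph{same} $u$ on both sides, and does not enter $\dot e_2$ at all; hence, upon linearizing at the equilibrium, the $\delta u$-contributions cancel and the error subsystem reduces to $\delta\dot e=(A_L-LC_L)\,\delta e$, with $A_L,C_L$ as in Proposition~\ref{prop:necessary_conditions_tracking}. Since $u=\gamma(z_2)+K(z_1-\pi(z_2))$ linearizes to $K\,\delta x$ plus terms linear in $\delta e$, the $\delta x$-equation reads $\delta\dot x=(A+BK)\,\delta x+(\text{linear in }\delta e)$. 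Thus the full Jacobian of~\eqref{eq:autonomous_system_w=0} is block upper-triangular with diagonal blocks $A+BK$ and $A_L-LC_L$, both Hurwitz by the choice of $K$ and $L$ in Algorithm~\ref{alg:dynamic_feedback_design}; by Lyapunov's indirect method, $(x_\circ^\star,z_\circ^\star)$ is locally exponentially stable.

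\emph{Step~2 (condition~\ref{def:tracking_b}).} With condition~\ref{def:tracking_a} established, I would apply Theorem~\ref{thm:characterization_dynamic_feedback}: it suffices to exhibit $C^2$ maps $\pi:W_\circ\to X$ and $\sigma:W_\circ\to Z$ solving~\eqref{eq:dyn_feedback} on $\Omega(W_\circ)$. I would take $\pi$ to be the map supplied to Algorithm~\ref{alg:dynamic_feedback_design} and set $\sigma(w)=(\pi(w),w)$, which is $C^2$. Then $G_c(\sigma(w))=\gamma(w)+K\bigl(\pi(w)-\pi(w)\bigr)=\gamma(w)$, so~\eqref{eq:dyn_feedback_a} and~\eqref{eq:dyn_feedback_c} reduce to~\eqref{eq:dyn_feedback_existence_a} and~\eqref{eq:dyn_feedback_existence_b}. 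For~\eqref{eq:dyn_feedback_b}, along $z=\sigma(w)$ the innovation $c(z_1,z_2)-c(\pi(w),w)$ vanishes identically, so its right-hand side collapses to $\bigl(f(\pi(w),\gamma(w),w),\,s(w)\bigr)$, which equals the left-hand side $\bigl(\tfrac{\partial\pi}{\partial w}s(w),\,s(w)\bigr)$ precisely because of~\eqref{eq:dyn_feedback_existence_a}. Since~\eqref{eq:dyn_feedback_existence} holds on $\Omega(W_\circ)$ by construction, so does~\eqref{eq:dyn_feedback}, and Theorem~\ref{thm:characterization_dynamic_feedback} yields $\lim_{t\to\infty}g(t)=0$. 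Combining Steps~1 and~2 establishes exact asymptotic tracking.

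\emph{Expected main obstacle.} The delicate point is Step~1: the linearization in the error coordinates must be carried out carefully enough to see both that the observer-error subsystem genuinely decouples (the $\delta u$-cancellation) and that the overall Jacobian is \emph{block triangular} with the two prescribed Hurwitz blocks -- this is what makes the separation principle rigorous here rather than merely heuristic, and requires correctly bookkeeping how $u=\gamma(z_2)+K(z_1-\pi(z_2))$ depends on $(x,e_1,e_2)$. Step~2 is then a short substitution once the ansatz $\sigma(w)=(\pi(w),w)$ is found; the only subtlety there is that~\eqref{eq:dyn_feedback} is needed only on the limit set $\Omega(W_\circ)$, which is exactly where~\eqref{eq:dyn_feedback_existence} is guaranteed to hold.
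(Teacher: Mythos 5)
Your proposal is correct and follows essentially the same route as the paper: condition~\ref{def:tracking_a} via the separation-principle linearization (your change to observer-error coordinates $e=z-(x,w)$ is exactly the similarity transformation $T$ the paper applies in the appendix to exhibit the block-triangular Jacobian with diagonal blocks $A+BK$ and $A_L-LC_L$), and condition~\ref{def:tracking_b} by invoking Theorem~\ref{thm:characterization_dynamic_feedback} with the ansatz $\sigma(w)=(\pi(w),w)$. Your write-up is in fact slightly more explicit than the paper's on the equilibrium identification and on the vanishing of the innovation term in~\eqref{eq:dyn_feedback_b}, but no new idea is involved.
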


\begin{proof}
Because $K$ and $L$ are designed so that $A+BK$ and $A_L - L C_L$ have 
eigenvalues in $\complex_{<}$, the matrices:
\begin{align*}
A+BK && \text{and} &&
\left[\begin{array}{ccc}
A-L_1 C & P-L_1 Q \\
-L_2 C & S-L_2 Q
\end{array}\right]
\end{align*}
have eigenvalues in $\complex_{<}.$ A standard calculation (see the proof of 
Theorem~\ref{thm:existence_dynamic_feedback}) shows that, for any $\Gamma$ and $\Pi,$ the matrix
\begin{align*}
\left[\begin{array}{ccc}
A & B K & B (\Gamma - K \Pi) \\
L_1 C & A+B K-L_1 C & P+B (\Gamma - K \Pi)-L_1 Q \\
L_2 C & -L_2 C & S-L_2 Q
\end{array}\right]
\end{align*}
also has eigenvalues in $\complex_{<}$.
It immediate to see that this matrix is the Jacobian of the dynamics
\begin{align*}
\dot x(t) &= f(x(t),G_c(z(t)), 0), \nonumber \\
\dot z(t) &= F_c(z(t), c(x(t),G_c(z(t))) ), 
\end{align*}
when $F_c(z,y)$ and $G_c(z,y)$ are constructed according to
Algorithm~\ref{alg:dynamic_feedback_design}, when letting
\begin{align}\label{eq:jacobians_pi_gamma}
\Gamma \defeq \left[\frac{\partial \gamma}{\partial w}\right]_{w=0} && \text{and} &&  \Pi \defeq \left[\frac{\partial \pi}{\partial w}\right]_{w=0}.
\end{align}
This proves that~\ref{def:tracking_a} holds. To show that~\ref{def:tracking_b} 
also holds, observe that the construction of $G_c(z)$ and $F_c(z,y)$ 
in Algorithm~\ref{alg:dynamic_feedback_design} implies the the fulfillment 
of~\eqref{eq:dyn_feedback} with 
$$\sigma(w) = (\pi(w), w).$$
Hence, \ref{def:tracking_b} follows by application of 
Theorem~\ref{thm:characterization_dynamic_feedback}.
\end{proof}

We conclude this section by illustrating the design procedure on a quadratic 
optimization problem with a linear plant.

\begin{example}[\textbf{\textit{Illustration of the design procedure for 
dynamic-feedback optimization algorithms}}]
\label{ex:linear_quadratic_pt4}
Consider the problem discussed in 
Examples~\ref{ex:linear_quadratic}--\ref{ex:linear_quadratic_pt3}.
Let $\Pi \in \real^{n \times p}, \Gamma \in \real^{m \times p}$ be matrices 
that satisfy the matrix 
identities~\eqref{eq:linear_static_feedback_solvability}. For this problem, 
the design procedure of Algorithm~\ref{alg:dynamic_feedback_design} reads 
as follows:
\begin{enumerate}
\item Let $K$ be any matrix such that $A+BK$ is Hurwitz;
\item Let $L$ be any matrix such that $A_L - L C_L$ is Hurwitz;
\item Decompose $z=(z_1, z_2), z_1 \in \real^{n}, z_2 \in \real^{p}$;
\item Decompose $L = [L_1^\tsp, L_2^\tsp]^\tsp, L_1 \in \real^{n \times q}, L_2 \in \real^{p \times q}$;
\item Design the map $G_c(z)$ as follows:
\begin{align*}
G_c(z) &= \Gamma z_2  + K(z_1 - \Pi z_2)\\
&= C_c z,
\end{align*}
with $C_c = \begin{bmatrix} K & \Gamma - K \Pi\end{bmatrix};$
\item Design the map $F_c(z,y)$ as follows:
\begin{align*}
F_c(z,y) &= 
\begin{bmatrix}
    A z_1 + B C_c z +Pz_2 -L_1(Cz_1 + Qz_2-y)\\
    S z_2 - L_2 ( C z_1 +Qz_2 -y)
\end{bmatrix}\\
&= A_c z + B_c y,
\end{align*}
with
\begin{align*}
A_c &= \begin{bmatrix}
A+B K-L_1 C & P+B (\Gamma - K \Pi)-L_1 Q \\
-L_2 C & S-L_2 Q
\end{bmatrix},\\
B_c &= 
\begin{bmatrix}
    L_1 \\ L_2
\end{bmatrix}.\QEDBB
\end{align*}
\end{enumerate}
\end{example}




\section{Extensions to constrained problems}\label{sec:extensions}

The optimization problem~\eqref{eq:optimization} contains only an equilibrium constraint, which we assume can be eliminated using the steady-state map $x = h(u,w)$ (cf. Assumption~\ref{as:steadyStateMap}) to obtain the unconstrained problem~\eqref{eq:optimization_unconstrained}. We now discuss extensions to optimization problems with more general constraints. To that end, consider the equality-constrained problem
\begin{alignat*}{2}
    &\text{minimize}\quad && \phi(u,w(t)) \\
    &\text{subject to}\quad && \psi_i(u,w(t)) = 0, \quad i=1,\ldots,r,
\end{alignat*}
where the constraint functions $\psi_i(u,w(t))$ may depend on the control input and disturbance. The associated Lagrangian is
\[
    L(u,\lambda,w(t)) = \phi(u,w(t)) + \sum_{i=1}^r \lambda_i\,\psi_i(u,w(t)),
\]
where $\lambda_i$ is the Lagrange multiplier associated with the $i\textsuperscript{th}$ equality constraint. A pair $(u,\lambda)\in\real^m\times\real^r$ is said to be a saddle-point of the Lagrangian if, for all $(\bar u,\bar\lambda)\in\real^m\times\real^r$,
\[
    L(u,\bar\lambda,w(t)) \leq L(u,\lambda,w(t)) \leq L(\bar u,\lambda,w(t)). 
\]
For any such saddle-point, if strong duality holds, $u$ is primal optimal, $\lambda$ is dual optimal, and the optimal duality gap is zero. Moreover, the gradient of the Lagrangian (assuming it exists) is zero at any saddle-point. It follows from the derivations in the previous sections that the 
gradient-feedback and parameter-feedback algorithms can be directly applied to 
seek a stationary point of the Lagrangian function by replacing the variable $u$ 
with the extended decision variable $\tilde u = (u, \lambda)$ and by considering 
an augmented loss function in~\eqref{eq:optimization_unconstrained} defined as
$\phi(\tilde u, w) = L(u, \lambda, w).$
Notice that, if the critical point computed 
by the controller is also a saddle-point and strong 
duality holds, then it is also a solution to the equality-constrained problem. When strong duality does 
not hold, however, such a saddle-point may not correspond to an optimizer~\cite[Ch.~5]{boyd2004}.

\section{Application robotic balancing control}
\label{sec:simulations}

In this section, we demonstrate the effectiveness of the methods in 
controlling an unstable system. We present two sets of simulations: the 
first considers a quadratic cost function, while the second explores a more 
general cost formulation.

\subsection{Quadratic cost}
We begin by illustrating our approach on the balancing robot presented in 
Section~\ref{sec:segway_robot}. 
Letting $x_1 = \theta$ and $x_2 = \dot \theta,$ a state-space model~\eqref{eq:plant} 
for this robot reads as:
\begin{align}\label{eq:state_space_robot}
\dot x_1(t) &= x_2(t),\nonumber\\
\dot x_2(t) &= \alpha \sin x_1(t) -\beta x_2(t) - \gamma u(t) \cos x_1(t) + \eta w_x(t),\notag\\
y(t) &= x_2(t) + w_y(t),
\end{align}
where $\alpha=\frac{m g \ell}{J_e}, \beta = \frac{k\ell^2}{J_e}, \gamma = \frac{m \ell}{J_e},$  and $\eta = \frac{1}{Je}.$ For our simulations, we used 
sinusoidal signals: $w_x(t) = \rho_x \cos(\bar \omega_x t),$ $\rho_x = 1$ [rad/s${}^2$], $\bar  \omega_x = 1$ [rad/s] and $w_y(t) = \rho_y \cos(\bar  \omega_y t),$ $\rho_y = 0.5$ [rad/s${^2}$], $\bar  \omega_y = 10$ [rad/s]. 
These signals are generated by an exosystem~\eqref{eq:exosystem} with state $w=(w_1, w_2,w_3,w_4) \in \real^4,$ vector field $s(w)=S w$ with
\begin{align}\label{eq:matrixS_im}
    S = \begin{bmatrix}
        0 & 1 & 0 & 0\\
        -\bar \omega_x^2 & 0 & 0 & 0\\
         0 & 0 & 0 & 1 \\
        0 & 0 & -\bar \omega_y^2 & 0\\
    \end{bmatrix},
\end{align}
initial condition $w(0)=(\rho_x, 0, \rho_y,0).$ This exosystem model generates 
the desired signals $w_x(t)$ and $w_y(t)$ when letting\footnote{
Recall that the solutions to the dynamical system $\dot w = S w$ with $w = (w_1, w_2)\in \real^2$ and $S=\left[\begin{smallmatrix}0 & 1\\ -\omega^2 &0\end{smallmatrix}\right]$ are:
$w_1(t) = \cos (\omega t) w_1(0) + \sin (\omega t) \frac{w_2(0)}{\omega}$ and 
$w_2(t) = -\omega \sin (\omega t) w_1(0)+  \cos (\omega t) w_2(0)$.
}
$w_x(t) = w_1(t)$ and $w_y(t) = w_3(t).$
It can be verified that a mapping $h(u,w)$ satisfying 
Assumption~\ref{as:steadyStateMap} is given by:
\begin{align*}
h(u,w) = \begin{bmatrix}
-2\tan^\inv \left(\frac{\alpha-\sqrt{\alpha^2- \eta^2 w_1^2  + \gamma^2 u^2}}{\eta w_1+\gamma u}\right)\\
0
\end{bmatrix},
\end{align*}
when $\eta w_1+\gamma u \neq 0$ and $h(u,w) = [\pi, 0]^\tsp$ otherwise.
At first, we consider the following instance of~\eqref{eq:optimization}:
\begin{align}\label{eq:opt_quadratic}
\underset{u \in \real,\,x \in \real^2}{\text{minimize}} ~~~& 
\tfrac{1}{2} \norm{x}^2,\\
\text{subject to:} ~~ &  0 = x_2, \nonumber\\
& 0= \alpha \sin x_1(t) -\beta x_2(t) \notag\\
& \quad\quad\quad\quad - \gamma u(t) \cos x_1(t) + \eta w_x(t),\notag
\end{align}
which formalizes the objective of balancing the robot at the 
vertical position ($x=0$) while rejecting disturbances. 
To determine mappings $\pi(w)$ and $\gamma(w)$ that 
solve~\eqref{eq:dyn_feedback_existence}, we sought an approximate solution 
using polynomial representations:
\begin{align}\label{eq:polynomial_representations_pi_gamma}
\pi(w) = \sum_{\ell=1}^{d_\pi} \langle \psi^\pi_{\ell}, \Theta_\ell({w}) \rangle,&&
\gamma(w) = \sum_{\ell=1}^{d_\gamma} \langle \psi^\gamma_{\ell}, \Theta_\ell({w}) \rangle,
\end{align}
of order $d_\pi$ and $d_\gamma$, respectively.
Here, $\psi^\pi_{\ell}$ and 
$\psi^\gamma_{\ell},$ are $\binom{\ell p}{\ell}$-dimensional vectors of 
coefficients and
$\Theta_\ell (w)$, $\ell \in \naturalpos,$ is an $\ell$-th order polynomial basis 
of functions; formally:
\begin{align*}
\Theta_\ell (w) &= [w_1^\ell, w_1^{\ell-1}w_2, \dots, w_1^{\ell-1} w_p, \\
    & w_1^{\ell-2} w_2^2, w_1^{\ell-2} w_2 w_3, \cdots, w_1^{\ell-2} w_2 w_p,\cdots, w_p^\ell
    ]^\tsp.
\end{align*}
The parameter vectors $\psi^\pi_{\ell}$ and $\psi^\gamma_{\ell}$ have been 
fitted numerically so that~\eqref{eq:dyn_feedback_existence} holds in a 
neighborhood of the origin of $\real^4.$ In our simulations, 
we used $d_\pi = d_\gamma =4$ and \eqref{eq:dyn_feedback_existence} were 
satisfied up to a relative error of $10^{-7.}$
It is worth stressing that, to solve the internal-model 
conditions~\eqref{eq:dyn_feedback_existence}, knowledge of the magnitude of 
the disturbance signals $\rho_x$ and $\rho_y$ is not needed (since these quantities do not appear in the exosystem model~\eqref{eq:matrixS_im}); it is only required 
to know their frequencies $\omega_x$ and $\omega_y$ (which are the only parameters in $S$).
We applied Algorithm~\ref{alg:dynamic_feedback_design}, selecting the controller 
gain $K$ and the observer gain $L$ so that the eigenvalues of $A_L-LC_L$ are uniformly distributed in the intervals  $[-1, -2]$ and $[-2, -3]$, respectively.
Simulation results for this problem are illustrated in Fig.~\ref{fig:segway_simulations}.
The simulation illustrates that the controller is effective in regulating the 
gradient signal $g(t)$ (cf. \eqref{eq:gradient_signal}) to zero, up to a 
numerical error of order $10^{-6}$ (see Fig.~\ref{fig:segway_simulations}(c)); 
the optimal robot configuration corresponds to a situation where the pendulum is 
precisely in the vertical position described by $(x_1,x_2)=(0,0)$ (see Fig.~\ref{fig:segway_simulations}(c)). Notice 
also that the control input that achieves balancing oscillates periodically 
(see Fig.~\ref{fig:segway_simulations}(b)) to cancel out the effect of the 
oscillatory disturbances (see Fig.~\ref{fig:segway_simulations}(a)).

\begin{figure}[t]
\centering \subfigure[]{\includegraphics[width=\columnwidth]{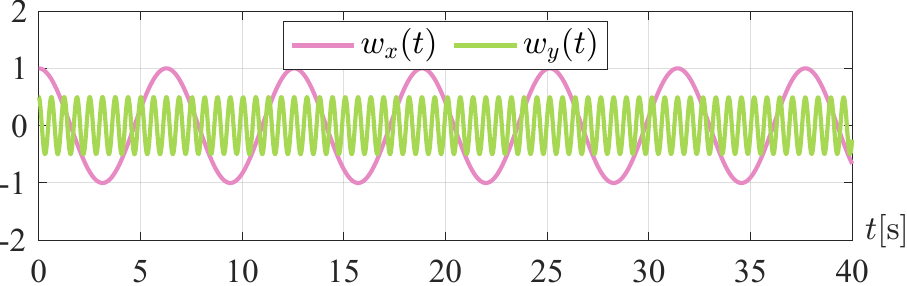}}
\hfill
\centering \subfigure[]{\includegraphics[width=\columnwidth]{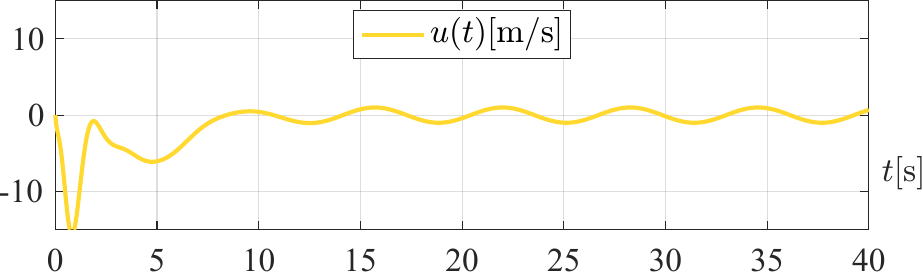}}\\
\centering \subfigure[]{\includegraphics[width=\columnwidth]{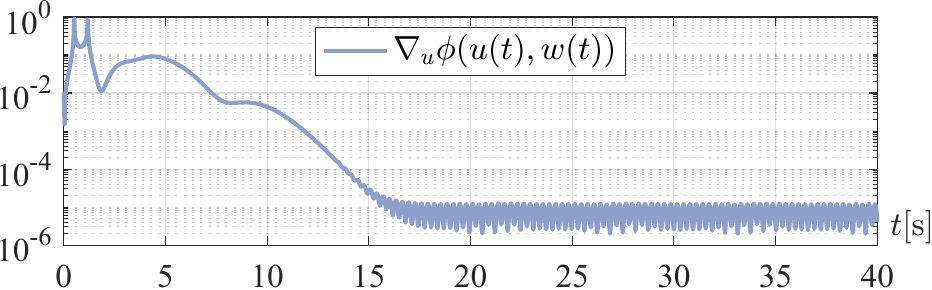}}\\
\centering \subfigure[]{\includegraphics[width=\columnwidth]{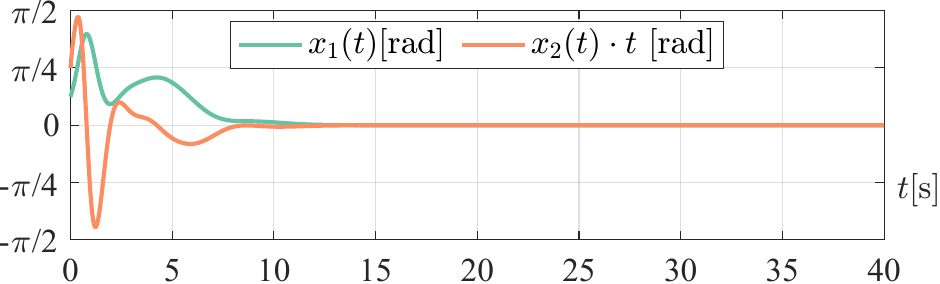}}
\caption{Performance of the proposed control design applied to solve the 
equilibrium-selection problem~\eqref{eq:opt_quadratic} to the balancing robot of 
Fig~\ref{fig:segway}(a) (see model equations~\eqref{eq:state_space_robot}). 
Despite the presence of unmeasurable time-varying disturbances acting on both the 
state and output (Fig.~\ref{fig:segway_simulations}(a)), the controller 
successfully regulates  the gradient error signal to zero -- up to a numerical 
tolerance of order $10^{-6}$ (Fig.~\ref{fig:segway_simulations}(c)). 
This corresponds to 
stabilizing the inverted pendulum in the upright position, characterized by zero 
vertical angle and zero velocity (Fig.~\ref{fig:segway_simulations}(a)). It is 
worth noting that the control input required to achieve this regulation is not 
constant (Fig.~\ref{fig:segway_simulations}(b)). See Fig.~\ref{fig:segway} for the 
parameter values used.
}
\vspace{-.5cm}
\label{fig:segway_simulations}
\end{figure}

\subsection{Logistic regression}
To illustrate the controller performance on a non-quadratic problem, we next replace the loss function in~\eqref{eq:opt_quadratic} by:
 \begin{align}\label{eq:pendulum_logistic}
\phi_0(u,x) =
\tfrac{1}{2} \norm{x}^2 + \tfrac{\kappa}{2}(\log [1+e^{\mu u}]+\log [1+e^{-\mu u}]),
\end{align}
where $\kappa, \mu >0$ (for our experiments, we choose $\mu=0.5$ and 
$\kappa=1$). 
In other words, \eqref{eq:pendulum_logistic} defines a logistic regression 
problem with a time-varying regularization term; intuitively, an optimizer 
of~\eqref{eq:pendulum_logistic} is an equilibrium state for the pendulum model 
such that the robot is as close as possible to being vertically balanced $(x=0),$
while large values (in modulus) of the control input $u$ are penalized by the 
logistic term $\log [1+e^{\mu u}]$. The mappings $\pi(w)$ and $\gamma(w)$ have 
been 
obtained by fitting~\eqref{eq:polynomial_representations_pi_gamma} numerically; 
in this case, in our simulations, \eqref{eq:dyn_feedback_existence} were 
satisfied up to a relative error of $10^{-4.}$
Simulation results for this problem are illustrated in 
Fig.~\ref{fig:segway_logistic}.
By comparing Fig.~\ref{fig:segway}(b) and Fig.~\ref{fig:segway_logistic}(a),
we observe that in the latter case the controller favors control inputs that are 
smaller in magnitude, in line with the above interpretation 
for~\eqref{eq:pendulum_logistic}; as a result of avoiding control inputs of 
large magnitude, the state is not regulated to zero exactly, but the robot 
swings around the vertical position (see Fig.~\ref{fig:segway_logistic}(c));
notice that this configuration does correspond to optimality for this 
optimization problem (see Fig.~\ref{fig:segway_logistic}(b)), in the sense that 
the gradient of \eqref{eq:pendulum_logistic} approaches zero, up to a numerical 
error. 
We reconduct the discrepancy between Fig.~\ref{fig:segway}(c) (where 
$\nabla_u \phi(u(t),w(t)) \approx 0$ up to a numerical error of order $10^{-6}$) 
and Fig.~\ref{fig:segway_logistic}(b) (where 
$\nabla_u \phi(u(t),w(t)) \approx 0$ up to a numerical error of order $10^{-4}$) 
to numerical error in the satisfaction of~\eqref{eq:dyn_feedback_existence}
(relative error of $10^{-7}$ in the former case and $10^{-4}$ in the latter).
We conjecture that the numerical error in achieving 
$\nabla_u \phi(u(t),w(t))\approx 0$ can be further reduced by using 
higher-order polynomials ($d_\pi,d_\gamma>4$) to 
fit~\eqref{eq:dyn_feedback_existence}.

\begin{figure}[t]
\centering \subfigure[]{\includegraphics[width=\columnwidth]{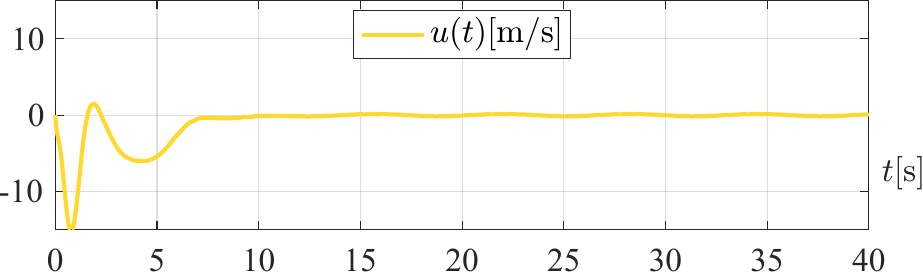}}\\
\centering \subfigure[]{\includegraphics[width=\columnwidth]{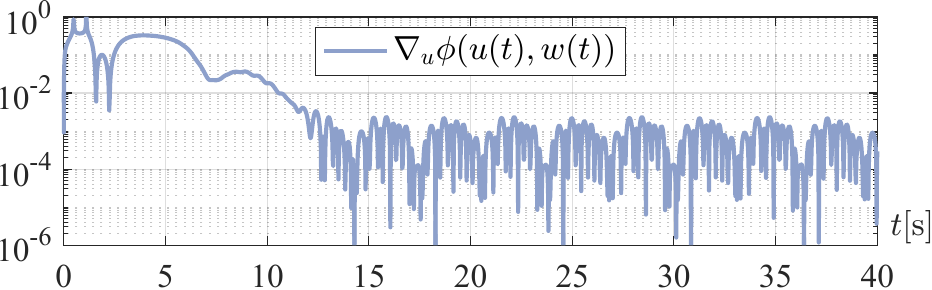}}\\
\centering \subfigure[]{\includegraphics[width=\columnwidth]{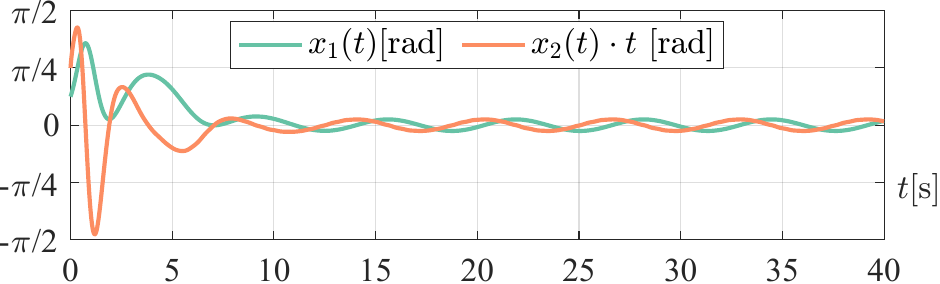}}
\caption{Performance of the proposed control design applied to solve an 
optimization problem with logistic cost~\eqref{eq:pendulum_logistic}. For simplicity of the illustration, we used constant disturbance signals
$\omega_x =1$ [rad/s], $\omega_y=10$ [rad/s]. 
Despite the presence of unknown disturbances acting on both the state and 
output, the controller regulates the gradient error signal to zero -- up to a 
numerical tolerance of order $10^{-4}$ (Fig.~\ref{fig:segway_logistic}(b)), which 
corresponds to striking a balance between stabilizing the inverted pendulum in the 
upright position, characterized by zero vertical angle and zero velocity 
(Fig.~\ref{fig:segway_logistic}(c)) and minimizing the control effort 
(Fig.~\ref{fig:segway_logistic}(a)). See Fig.~\ref{fig:segway} for the parameter 
values.
}
\vspace{-.5cm}
\label{fig:segway_logistic}
\end{figure}

\section{Conclusions}
\label{sec:conclusions}

We have shown that the equilibrium-selection problem studied in feedback 
optimization can be recast as an output regulation problem. This allows us to 
develop methods for exact setpoint tracking even when the optimization problem is 
time-varying. Fundamentally, we show that this requires knowledge of an internal 
model of the temporal variability as well as a particular control design 
architecture --- a fundamental limitation that is proven here for the first time in 
the literature. 
Our algorithm design is novel in the literature, and it combines an output-feedback 
control action that stabilizes the plant with an additional control action that 
drives the system toward the set of critical points of the optimization.
This work opens the opportunity for several directions of future work, including 
an investigation of methods that use inexact internal model knowledge or learn the internal model online, the relaxation of the convexity and smoothness 
assumptions, and an investigation of discrete-time algorithms.

\appendices


%
%

\section{Complementary proofs}
\label{sec:comlemetary_proofs}

\subsection{Proof of Theorem~\ref{thm:existence_parameter_feedback}}
\textit{(Only if)}. Suppose Problem~\ref{prob:parameter_feedback} is solvable, meaning that there exists $H_c(x,w)$ that satisfies \ref{prob:parameter_feedback_a} and \ref{prob:parameter_feedback_b}. 
By Theorem~\ref{thm:characterization_parameter_feedback}, there exists $\pi(w)$ such that the 
conditions in~\eqref{eq:par_feedback} are satisfied. Consequently,~\eqref{eq:par_feedback_existence} is also satisfied by
$\gamma(w)=H_c(\pi(w), w).$

\textit{(If)}. 
We aim to show that, under condition~\eqref{eq:par_feedback_existence}, there 
exists a function $H_c(x, w)$ such that both~\ref{prob:parameter_feedback_a} 
and~\ref{prob:parameter_feedback_b} are satisfied.
First, notice that, by Assumption~\ref{as:stabilizability_detectability}, 
there exists a matrix $K$ such that $(A+BK)$ has eigenvalues in $\complex_{<}$.
Moreover, let $\gamma(w)$ and $\pi(w)$ be two functions such 
that~\eqref{eq:par_feedback_existence} hold, and set:
\begin{align*}
    H_c(x,w) = \gamma(w) + K(x - \pi(w)).
\end{align*}
This choice ensures that condition~\ref{prob:parameter_feedback_a} is 
satisfied: indeed, the Jacobian of $f(x(t),H_c(x(t),0),0)$ is $A+BK$ 
(see~\eqref{eq:closed_loop_static_2} for notation), which has eigenvalues in 
$\complex_{<}.$ Moreover, by construction, $H_c(\pi(w), w) = \gamma(w),$ so 
condition~\eqref{eq:par_feedback_existence_a} reduces 
to~\eqref{eq:par_feedback_a}, and similarly, 
\eqref{eq:par_feedback_existence_b} 
reduces to~\eqref{eq:par_feedback_b}. Thus, by 
Theorem~\ref{thm:characterization_parameter_feedback}, 
condition~\ref{prob:parameter_feedback_b} is also satisfied.

\subsection{Proof of Theorem~\ref{thm:existence_dynamic_feedback}}

\textit{(Only if)}. Suppose Problem~\ref{prob:feedback_opt_formal} is solvable. 
Then, by Theorem~\ref{thm:characterization_dynamic_feedback}, there exists 
functions $\pi(w)$, $\sigma(w)$, and $G_c(z)$ such that the conditions 
in~\eqref{eq:dyn_feedback} are satisfied. Consequently, the conditions 
in~\eqref{eq:dyn_feedback_existence} are also satisfied by letting
$\gamma(w) = G_c(\sigma(w)).$

\textit{(If)}. We aim to show that, under 
condition~\eqref{eq:dyn_feedback_existence}, there exists functions $F_c(z,y)$  
and $G_c(z)$ such that both 
conditions~\ref{def:tracking_a}-\ref{def:tracking_b} are satisfied. 
First of all, notice that, by 
Assumption~\ref{as:stabilizability_detectability}, there exist matrices $K$ 
and $L$ such that (letting
$L = [L_1^\tsp, L_2^\tsp]^\tsp, L_1 \in \real^{n \times q}, L_2 \in \real^{p \times q}$):
\begin{align}
\label{eq:eigs_ABK_observer}
A+BK && \text{and} &&
\left[\begin{array}{ccc}
A-L_1 C & P-L_1 Q \\
-L_2 C & S-L_2 Q
\end{array}\right]
\end{align}
have eigenvalues in $\complex_{<}.$ This implies that, for any pair of 
matrices $\Gamma$ and $\Pi,$ the matrix
\begin{align}\label{eq:jacobian_aux2}
\left[\begin{array}{ccc}
A & B K & B (\Gamma - K \Pi) \\
L_1 C & A+B K-L_1 C & P+B (\Gamma - K \Pi)-L_1 Q \\
L_2 C & -L_2 C & S-L_2 Q
\end{array}\right]
\end{align}
has eigenvalues in $\complex_{<}$. To see this, observe that applying the 
similarity transformation $M \mapsto T^{-1} M T$ with
\begin{align*}
T=\left[\begin{array}{ccc}I_n & 0 & 0 \\ I_n & I_n & 0 \\ 0 & 0 & I_p\end{array}\right]
\end{align*}
to the matrix in~\eqref{eq:jacobian_aux2} gives the block upper-triangular matrix:
\begin{align*}
\left[\begin{array}{ccc}A+B K & B K & B(\Gamma-K \Pi) \\ 0 & A-L_1 C & P-L_1 Q \\ 0 & -L_2 C & S-L_2 Q\end{array}\right],
\end{align*}
whose eigenvalues are determined by~\eqref{eq:eigs_ABK_observer}.

Next, let $\pi(w)$ and $\gamma(w)$ be functions that satisfy the
conditions~\eqref{eq:dyn_feedback_existence}.
With the controller defined by $F_c(z,y)$ and $G_c(z)$ from Algorithm~\ref{alg:dynamic_feedback_design},
it is immediate to verify that 
\eqref{eq:jacobian_aux2}, with $\Gamma$ and $\Pi$ the Jacobian matrices defined in~\eqref{eq:jacobians_pi_gamma}, is the Jacobian matrix of the closed-loop dynamics with zero disturbance~\eqref{eq:autonomous_system_w=0}.
This establishes~\ref{def:tracking_a}. To 
establish~\ref{def:tracking_b}, observe that the construction of 
$G_c(z)$ and $F_c(z,y)$ in Algorithm~\ref{alg:dynamic_feedback_design} 
ensures the fulfillment of~\eqref{eq:dyn_feedback} with 
$\sigma(w) = (\pi(w), w).$
Therefore, condition~\ref{def:tracking_b} follows directly from 
Theorem~\ref{thm:characterization_dynamic_feedback}.

\section{Basic concepts from center manifold theory}
\label{sec:center_manifold}

We now summarize relevant facts on center manifold theory 
from~\cite{JC:81}; see also~\cite{SW:23}. Consider the nonlinear system:
\begin{align}\label{eq:nonlinear_sys_preliminaries}
\dot{x}=f(x)
\end{align}
where $f$ is a $C^k$ vector field defined on an open subset $U$ of $\real^n$. Consider an equilibrium point for $f$, which we take without loss of generality to be zero, i.e., $f(0)=0$. Let
$F=\left[\frac{\partial f}{\partial x}\right]_{x=0},$
denote the Jacobian matrix of $f$ at $x=0$.
Suppose the matrix $F$ has $n^{\circ}$ eigenvalues with zero real part, 
$n^{-}$ eigenvalues with negative real part, and $n^{+}$ eigenvalues with 
positive real part. 
Let $E^{-}, E^{\circ},$ and $E^{+}$ be the (generalized) real eigenspaces of 
$F$ associated with eigenvalues of~$F$ lying on the open left half plane, the 
imaginary axis, and the open right half plane, respectively.
Note that $E^{\circ}, E^{-}, E^{+}$ have dimension  
$n^{\circ}, n^{-}, n^{+}$, respectively and that each of these spaces is 
invariant under the flow of $\dot x = Fx.$
If the linear mapping $F$ is viewed as a representation of the differential 
(at $x=0$) of the nonlinear mapping $f$, 
its domain is the tangent space $T_0 U$ to $U$ at $x=0$, and the three 
subspaces in question can be viewed as subspaces of $T_0 U$ satisfying
$T_0 U=E^{\circ} \oplus E^{-} \oplus E^{+}.$
We refer to~\cite[Sec.~A.II]{MI:01} for a precise definition of $C^k$ 
manifolds; loosely speaking, a set $S \subset U$ is a $C^k$ manifold it can be 
locally represented as the graph of a $C^k$ function.


\begin{definition}[\textbf{\textit{Locally invariant manifold}}]
A $C^k$ manifold $S$ of $U$ is 
locally invariant for~\eqref{eq:nonlinear_sys_preliminaries} if, for 
each $x_{\circ} \in S$, there exists $t_1<0<t_2$ such that the integral curve 
$x(t)$ of~\eqref{eq:nonlinear_sys_preliminaries} satisfying $x(0)=x_{\circ}$ satisfies $x(t) \in S$ for all $t \in (t_1, t_2)$.
\QEDB\end{definition}

Intuitively, by letting $x=(y,\theta)$ and expressing~\eqref{eq:nonlinear_sys_preliminaries} 
as:
\begin{align}\label{eq:nonlinear_sys_preliminaries_b}
\dot{y}=f_y(\theta,y), && \dot{\theta}=f_\theta(\theta,y),
\end{align}
a curve $y = \pi(\theta)$ is an invariant manifold 
for~\eqref{eq:nonlinear_sys_preliminaries_b} if the solution 
of~\eqref{eq:nonlinear_sys_preliminaries_b} with 
$\theta(0)=\theta_\circ$ and $y(0) = \pi(\theta_\circ)$ lies on the curve 
$y=\pi(\theta)$ for $t$ in a neighborhood of $0.$
The notion of invariant manifold is useful as, under certain assumptions, it 
allows us to reduce the analysis of~\eqref{eq:nonlinear_sys_preliminaries} to 
the study of a reduced system in the variable $\theta$ only. The remainder of 
this section is devoted to formalizing this fact.

\begin{definition}[\textbf{\textit{Center manifold}}]
Let $x=0$ be an equilibrium of~\eqref{eq:nonlinear_sys_preliminaries}. A 
manifold $S$, passing through $x=0$, is said to be a center manifold 
for~\eqref{eq:nonlinear_sys_preliminaries} at $x=0$ if it is locally invariant 
and the tangent space to $S$ at 0 is exactly $E^{\circ}$.
\QEDB\end{definition}

Intuitively, the invariant manifold $y=\pi(\theta)$ is a center 
manifold for~\eqref{eq:nonlinear_sys_preliminaries_b} when all orbits of $y$ decay to zero and those of $\theta$ neither 
decay nor grow exponentially. 


In what follows, we will assume that all eigenvalues of $F$ have nonpositive 
real part, i.e., $n^+ = 0$.
When this holds, it is always possible to choose coordinates in 
$U$ such that~\eqref{eq:nonlinear_sys_preliminaries} is
\begin{align}\label{eq:preliminaries_first_order_system}
 \dot{y}=A y+g(y, \theta), && 
 \dot{\theta}=B \theta+h(y, \theta), 
\end{align}
where $A$ is an $n^{-} \times n^{-}$ matrix having all eigenvalues 
with negative real part, $B$ is an $n^{\circ} \times n^{\circ}$ 
matrix having all eigenvalues with zero real part, and the functions $g$ and 
$h$ are $C^k$ functions vanishing at $(y, \theta)=(0,0),$ together with all 
their first-order derivatives.
Because of their equivalence, any conclusion 
drawn for~\eqref{eq:preliminaries_first_order_system} will apply also 
to~\eqref{eq:nonlinear_sys_preliminaries}. 
The following result ensures the existence of a center manifold.

\begin{theorem}[\textbf{\textit{Center manifold existence theorem}}]
\label{thm:existence_center_manifold}
Assume that $n^+ = 0$. There exists a neighborhood 
$V \subset \real^{n^0}$ of $0$ and a class $C^{k-1}$ 
mapping $\pi: V \rightarrow \real^{n^{-}}$ such that the set
$
    S = \{(y, \theta) \in \real^{n^{-}}  \times V: y=\pi(\theta)\},
$
is a center manifold for~\eqref{eq:preliminaries_first_order_system}. 
\QEDB\end{theorem}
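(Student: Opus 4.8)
The plan is to follow the classical contraction‑mapping construction of Carr~\cite{JC:81}, specialized to the split form~\eqref{eq:preliminaries_first_order_system}; see also~\cite{SW:23} for the estimates. Since a center manifold is only a local object, the first step is to globalize the problem: pick a smooth cutoff $\chi$ that equals $1$ on the ball of radius $\rho$ about the origin of $\real^{n^-}\times\real^{n^\circ}$ and vanishes outside the ball of radius $2\rho$, and replace $g,h$ by $g_\rho:=\chi\,g$ and $h_\rho:=\chi\,h$. Because $g,h$ are $C^k$ and vanish together with their first derivatives at the origin, for $\rho$ small the modified nonlinearities are globally defined, globally Lipschitz, and have Lipschitz constants and sup‑norms that tend to $0$ as $\rho\to0$. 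Any invariant manifold of the modified system contained in the ball of radius $\rho$ is automatically invariant for~\eqref{eq:preliminaries_first_order_system}, so it suffices to build a global center manifold for the modified equations and then restrict to a neighborhood of the origin on which $\chi\equiv1$.

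Second, I would recast invariance as a fixed‑point problem. Writing the manifold as a graph $y=\pi(\theta)$, invariance is equivalent to $D\pi(\theta)\,[B\theta+h_\rho(\pi(\theta),\theta)]=A\pi(\theta)+g_\rho(\pi(\theta),\theta)$. Rather than attacking this quasilinear PDE directly, I would use variation of constants: for a candidate Lipschitz $\pi$ with $\pi(0)=0$, let $\theta(\cdot\,;\xi)$ be the (globally defined) solution of $\dot\theta=B\theta+h_\rho(\pi(\theta),\theta)$ with $\theta(0)=\xi$; since $B$ has purely imaginary eigenvalues and $h_\rho$ has small Lipschitz constant, a Gronwall estimate gives $\Vert\theta(t;\xi)\Vert$ growing at most subexponentially in $|t|$. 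Because $A$ is Hurwitz, the unique bounded‑on‑$(-\infty,0]$ solution of $\dot y=Ay+g_\rho(\pi(\theta),\theta)$ along that orbit is obtained by integrating from $-\infty$, and its value at $t=0$ defines the operator $(\mc T\pi)(\xi):=\int_{-\infty}^{0}e^{-As}\,g_\rho\big(\pi(\theta(s;\xi)),\theta(s;\xi)\big)\,ds$, a fixed point of which is exactly the sought $\pi$. Using $\Vert e^{-As}\Vert\le Ce^{\alpha s}$ for $s\le0$, the subexponential bound on $\theta$, and the smallness of $\mathrm{Lip}(g_\rho),\mathrm{Lip}(h_\rho)$, one checks that $\mc T$ maps the complete metric space of globally Lipschitz maps $\pi$ with $\pi(0)=0$ and Lipschitz constant at most $1$ (under the sup metric) into itself and is a contraction once $\rho$ is small; Banach's theorem yields a unique such $\pi$. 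Evaluating the invariance identity at $\theta=0$, and using that $g_\rho,h_\rho$ vanish to first order there, forces $D\pi(0)=0$, so the graph is tangent to $E^\circ=\{0\}\times\real^{n^\circ}$ at the origin — hence a locally invariant center manifold after restriction.

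The hard part will be the regularity statement: the contraction above delivers only a Lipschitz $\pi$, whereas the claim is $\pi\in C^{k-1}$. I would obtain this by induction on the order of differentiation: assuming inductively that $\pi\in C^{j}$ for some $j<k-1$, differentiate the fixed‑point equation formally to obtain an integral equation for $D\pi$ of the same contractive type, now posed on bounded continuous matrix‑valued maps, whose coefficients are built from $Dg_\rho,Dh_\rho$ (which are $C^{k-1}$ and small) and from the variational equation for $\partial_\xi\theta(s;\xi)$. One shows this auxiliary equation has a unique bounded continuous solution and then, by a fiber‑contraction argument (or, alternatively, an Arzel\`a--Ascoli compactness argument on difference quotients of $\pi$), identifies that solution with the genuine derivative $D\pi$; iterating up to $k-1$ times gives $\pi\in C^{k-1}$. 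The point to watch is the bookkeeping: each differentiation consumes one degree of smoothness of the vector field and worsens the polynomial‑in‑$|s|$ growth bound on the variational flow, so $\rho$ must be fixed only after deciding how many derivatives are needed — this is exactly where the loss $k\mapsto k-1$ enters. With existence and regularity established, local invariance and tangency to $E^\circ$ are immediate from the construction, which completes the proof.
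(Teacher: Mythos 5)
This theorem is not proved in the paper at all: it is quoted as background from Carr's monograph \cite{JC:81} (see also \cite{SW:23}), so there is no in-paper argument to compare yours against. Your proposal is a correct outline of the standard Lyapunov--Perron construction that underlies the cited reference: cutoff to make the nonlinearities globally small-Lipschitz, the integral operator $(\mathcal{T}\pi)(\xi)=\int_{-\infty}^{0}e^{-As}g_\rho\bigl(\pi(\theta(s;\xi)),\theta(s;\xi)\bigr)\,ds$ characterizing the unique backward-bounded stable component (the sign conventions check out, since $A$ Hurwitz gives $\Vert e^{-As}\Vert\le Ce^{\alpha s}$ for $s\le 0$), a Banach fixed point on Lipschitz graphs through the origin, tangency to $E^{\circ}$ from the first-order vanishing of $g_\rho,h_\rho$, and a fiber-contraction induction for $C^{k-1}$ regularity. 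Two minor imprecisions, neither fatal: the perturbed center flow is not literally subexponential but satisfies $\Vert\theta(t;\xi)\Vert\lesssim e^{\beta|t|}$ with $\beta$ made smaller than $\alpha$ by shrinking $\rho$ (which is all the convergence and contraction estimates need), and the self-map/contraction step tacitly requires a Gronwall comparison of the flows $\theta(\cdot;\xi)$ generated by two different candidate graphs $\pi_1,\pi_2$, which is where the smallness of $\mathrm{Lip}(h_\rho)$ is actually consumed. With those points made explicit, the sketch is a faithful reconstruction of the proof the paper delegates to the literature.
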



By definition, a center manifold 
for~\eqref{eq:preliminaries_first_order_system} passes through $(0,0)$ and 
is tangent to the subset of points with $y = 0$. 
Namely,
\begin{align}\label{eq:mapping_center_manifold}
    \pi(0)=0 \qquad\text{and}\qquad \frac{\partial \pi}{\partial\theta}(0)=0.
\end{align}
Moreover, this manifold is locally invariant 
for~\eqref{eq:preliminaries_first_order_system}: this imposes on the 
mapping $\pi$ the constraint:
%
%
\begin{align}\label{eq:differential_equation_center_manifold}
\frac{\partial \pi}{\partial \theta}(B \theta+h(\pi(\theta), \theta))=A \pi(\theta)+g(\pi(\theta), \theta),    
\end{align}
as deduced by differentiating with respect to time any solution 
$(y(t), \theta(t))$ of~\eqref{eq:preliminaries_first_order_system} on the manifold $y(t)=\pi(\theta(t))$. In other words, any 
center manifold for~\eqref{eq:preliminaries_first_order_system} can 
equivalently be described as the graph of a mapping $y=\pi(\theta)$ satisfying 
the 
partial differential equation~\eqref{eq:differential_equation_center_manifold}, 
with the constraints~\eqref{eq:mapping_center_manifold}.

\begin{remark}
Theorem~\ref{thm:existence_center_manifold} shows existence but 
not the uniqueness of a center manifold. Moreover, (i)
if $g$ and $h$ are $C^k, k \in \naturalpos,$  then
\eqref{eq:preliminaries_first_order_system} admits a $C^{k-1}$
center manifold; (ii) if $g$ and $h$ are 
$C^\infty$ functions, then \eqref{eq:preliminaries_first_order_system} has a $C^k$ 
center manifold for any finite $k$, but not necessarily a $C^\infty$ center 
manifold.
\QEDB \end{remark}

The next result shows that any $y$-trajectory
of~\eqref{eq:preliminaries_first_order_system}, starting sufficiently close to 
the origin converges, as time tends to infinity, to a trajectory that belongs 
to the center manifold.

\begin{theorem}[\textbf{\textit{Center manifold is locally attractive}}]
\label{thm:local_attractivity_manifold}
Assume that $n^+ = 0$ and 
suppose $y=\pi(\theta)$ is a center manifold 
for~\eqref{eq:preliminaries_first_order_system} at $(0,0)$. Let 
$(y(t), \theta(t))$ be a solution
of~\eqref{eq:preliminaries_first_order_system}. There 
exists a neighborhood $U^{\circ}$ of $(0,0)$ and real numbers $M>0$ and $K>0$ 
such that, if $(y(0), \theta(0)) \in U^{\circ}$, then for all $t \geq 0$,
\begin{align*}
\|y(t)-\pi(\theta(t))\| \leq M e^{-K t}\|y(0)-\pi(\theta(0))\|. \tag*{\QEDB}
\end{align*}
\end{theorem}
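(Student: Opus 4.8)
The plan is to flatten the center manifold by the change of variables $v := y - \pi(\theta)$, turning the manifold into the coordinate subspace $\{v=0\}$, and then to show that the $v$-dynamics are a perturbation — small near the origin — of the exponentially stable linear system $\dot v = Av$. Concretely, I would differentiate $v$ along solutions of~\eqref{eq:preliminaries_first_order_system} and substitute the invariance identity~\eqref{eq:differential_equation_center_manifold}; a short computation gives
\[
\dot v = A v + G(v,\theta), \qquad \dot\theta = B\theta + h(v+\pi(\theta),\theta),
\]
where $G(v,\theta)$ collects the differences $g(v+\pi(\theta),\theta)-g(\pi(\theta),\theta)$ and $\tfrac{\partial\pi}{\partial\theta}(\theta)\bigl[h(v+\pi(\theta),\theta)-h(\pi(\theta),\theta)\bigr]$. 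By construction $G(0,\theta)\equiv 0$; moreover, using $\pi(0)=0$ and $\tfrac{\partial\pi}{\partial\theta}(0)=0$ from~\eqref{eq:mapping_center_manifold} together with the fact that $g$ and $h$ vanish with their first-order derivatives at the origin, one obtains $\tfrac{\partial G}{\partial v}(0,0)=0$. Hence, by continuity of the derivatives, for every $\delta>0$ there is a neighborhood of $(0,0)$ on which $\|G(v,\theta)\|\le\delta\|v\|$.

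Next I would exploit that $A$ is Hurwitz: pick $C\ge 1$ and $\alpha>0$ with $\|e^{At}\|\le Ce^{-\alpha t}$ for $t\ge 0$, and shrink the neighborhood above so that the associated $\delta$ satisfies $C\delta<\alpha$. For any solution remaining in that neighborhood, the variation-of-constants formula yields
\[
\|v(t)\| \le C e^{-\alpha t}\|v(0)\| + C\delta\int_0^t e^{-\alpha(t-s)}\|v(s)\|\,ds,
\]
and applying Gr\"onwall's inequality to $e^{\alpha t}\|v(t)\|$ gives $\|v(t)\|\le C\,e^{-(\alpha - C\delta)t}\|v(0)\|$; since $v = y - \pi(\theta)$, this is exactly the asserted estimate, with $M=C$ and $K=\alpha-C\delta>0$.

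The main obstacle I anticipate is upgrading this to a genuine "for all $t\ge 0$" statement: a priori the slow coordinate $\theta$ — whose linear part $B$ has purely imaginary eigenvalues and may contain nilpotent blocks — can drift out of the neighborhood on which $\|G(v,\theta)\|\le\delta\|v\|$ was established, so the Gr\"onwall estimate could break down before the trajectory reaches the origin. I would resolve this with the usual cut-off device: replace $g$ and $h$ by $\chi g$ and $\chi h$, where $\chi$ is a smooth bump equal to $1$ near the origin and supported in a small ball, so that the modified nonlinearities and the corresponding center-manifold graph (globally defined after the cut-off) are globally Lipschitz with arbitrarily small constant; then the bound on $G$ holds on all of $\real^{n^{-}}\times\real^{n^{\circ}}$, the estimate above holds for every solution of the modified system and all $t\ge 0$, and the modified system agrees with~\eqref{eq:preliminaries_first_order_system} on a neighborhood $U^{\circ}$ of $(0,0)$. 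A standard bootstrap on the exit time from $U^{\circ}$ — using that the estimate keeps $v(t)$ small, and, in the intended applications, that $\theta$ plays the role of the exosystem state whose trajectories stay bounded by Assumption~\ref{as:exosystem} — transfers the conclusion back to the original system.
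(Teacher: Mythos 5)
This theorem is stated in the paper as background material imported from Carr~\cite{JC:81}; the paper gives no proof of its own, so there is nothing to compare line by line. Your argument is, in substance, the standard proof of this attractivity result: flatten the manifold via $v=y-\pi(\theta)$, use the invariance identity~\eqref{eq:differential_equation_center_manifold} to cancel the zeroth-order terms so that $\dot v=Av+G(v,\theta)$ with $G(0,\theta)\equiv 0$ and $\tfrac{\partial G}{\partial v}(0,0)=0$ (the latter following from~\eqref{eq:mapping_center_manifold} and the vanishing first derivatives of $g,h$), and then run variation of constants plus Gr\"onwall against the Hurwitz part $A$. The algebra for $G$, the smallness estimate $\|G(v,\theta)\|\le\delta\|v\|$ via the mean value inequality, and the choice $K=\alpha-C\delta>0$ all check out.

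The one genuine soft spot is the last step, which you correctly identify but only partially close. The cut-off modification makes the estimate global for the \emph{modified} system, but transferring it back to~\eqref{eq:preliminaries_first_order_system} for \emph{all} $t\ge 0$ requires that the full trajectory $(y(t),\theta(t))$ — in particular its slow component $\theta(t)$, whose linear part $B$ is only neutrally stable — never leaves the region where the cut-off is inactive. Your exponential bound controls $v(t)$ but says nothing about $\theta(t)$, so the ``bootstrap on the exit time'' cannot be completed from the stated hypotheses alone; this is precisely why Carr's corresponding theorem adds the hypothesis that the reduced (center) dynamics are stable. As literally stated the theorem should be read either for the cut-off system or under the implicit assumption that $\theta(t)$ remains in $U^{\circ}$ — which is how the paper uses it, since there $\theta$ is the exosystem state $w$ whose trajectories are bounded near the origin by Assumption~\ref{as:exosystem}. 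Your appeal to that assumption is the right fix for the intended application, but it is an added hypothesis, not a consequence of the theorem's stated premises; a fully self-contained proof should make it explicit.
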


\renewcommand{\IEEEbibitemsep}{-2pt plus 0.0ex}
\bibliographystyle{myIEEEtran}
\bibliography{BIB/brevalias,BIB/full_GB,BIB/GB,BIB/references}

\end{document}